\numberwithin{equation}{section}
\numberwithin{figure}{section}
\newcommand\R{\mathbb{R}}
\newcommand\Z{\mathbb{Z}}
\newcommand\T{\mathbb{T}}
\newcommand\al{\alpha}
\newcommand\gam{\gamma}
\newcommand\del{\delta}
\newcommand\Del{\Delta}
\newcommand\lam{\lambda}
\newcommand\Lam{\Lambda}
\newcommand\sig{\sigma}
\newcommand\eps{\varepsilon}
\renewcommand\S{\mathcal{S}}
\renewcommand\le{\leqslant}
\renewcommand\ge{\geqslant}
\renewcommand\leq{\leqslant}
\renewcommand\geq{\geqslant}
\newcommand\sbt{\subset}
\newcommand{\ft}[1]{\widehat{#1}}
\newcommand{\supp}{\operatorname{supp}}
\newcommand{\spec}{\operatorname{spec}}
\newcommand{\sign}{\operatorname{sign}}
\newcommand{\norm}[2]{\|{#1}\|_{{#2}}}
\newcommand{\nmb}[1]{\norm{{#1}}{*}}
\theoremstyle{plain}
\newtheorem{thm}{Theorem}[section]
\newtheorem{lem}[thm]{Lemma}
\newtheorem{lemma}[thm]{Lemma}
\newtheorem*{claim*}{Claim}
\newcommand{\thmref}[1]{Theorem~\ref{#1}}
\newcommand{\secref}[1]{Section~\ref{#1}}
\newcommand{\lemref}[1]{Lemma~\ref{#1}}
\theoremstyle{definition}
\newtheorem*{definition*}{Definition}
\newtheorem*{remarks*}{Remarks}
\newtheorem*{remark*}{Remark}
\newtheorem{remark}[thm]{Remark}
\newenvironment{enumerate-alph}
{\begin{enumerate}
\addtolength{\itemsep}{5pt}
}
{\end{enumerate}}
\newenvironment{enumerate-num}
{\begin{enumerate}
\addtolength{\itemsep}{5pt}
}
{\end{enumerate}}
\newenvironment{enumerate-text}
{\begin{enumerate}
\addtolength{\itemsep}{5pt}
}
{\end{enumerate}}
\begin{document}

\title{Schauder frames of discrete translates in $L^p(\mathbb{R})$}

\author{Nir Lev}
\address{Department of Mathematics, Bar-Ilan University, Ramat-Gan 5290002, Israel}
\email{levnir@math.biu.ac.il}

\author{Anton Tselishchev}
\address{Department of Mathematics, Bar-Ilan University, Ramat-Gan 5290002, Israel}
\address{St. Petersburg Department of Steklov Mathematical Institute, Fontanka 27, St. Petersburg 191023, Russia}
\email{celis\_anton@pdmi.ras.ru}

\date{December 10, 2025}
\subjclass[2020]{42A10, 42C15, 46B15}
\keywords{Schauder frames, translates}
\thanks{Research supported by ISF Grants No.\ 1044/21 and 854/25}

\begin{abstract}
For every $p > (1 + \sqrt{5})/2$ we construct a uniformly discrete real sequence $\{\lambda_n\}_{n=1}^\infty$ satisfying $|\lambda_n| = n + o(1)$, a function $g \in L^p(\mathbb{R})$, and continuous linear functionals $\{g^*_n\}_{n=1}^\infty$ on $L^p(\mathbb{R})$, such that every $f \in L^p(\mathbb{R})$ admits a series expansion
\[
f(x) = \sum_{n=1}^{\infty} g_n^*(f) g(x-\lambda_n)
\]
convergent in the $L^p(\mathbb{R})$ norm. We moreover show that $g$ can be chosen nonnegative.
\end{abstract}

\maketitle


\section{Introduction}
\label{secA1}

\subsection{}
A system of vectors $\{x_n\}_{n=1}^{\infty}$ in a Banach space $X$
is called a \emph{Schauder basis} if every  $x \in X$  admits a unique series expansion
$x = \sum_{n=1}^{\infty} c_n x_n$ where $\{c_n\}$ are scalars.
 It is well known that in this case there exist 
 biorthogonal continuous linear functionals $\{x_n^*\}$
 such that  the coefficients of the series expansion are given by
 $c_n = x_n^*(x)$   (see e.g.\  \cite[Section 1.6]{You01}).
 If the series converges unconditionally
 (i.e.\ if it converges for any rearrangement of its terms)
for every  $x \in X$, then $\{x_n\}$ 
is said to be an \emph{unconditional} Schauder basis.

Given a function $g \in L^p(\R)$, we denote its translates by
\begin{equation}
\label{eqTL}
(T_\lambda g)(x) = g(x-\lambda), \quad \lam \in \R.
\end{equation}
There is a long-standing open problem, asking
whether the space $L^p(\R)$, $1<p<\infty$, admits
a Schauder basis formed by translates of a single function
(see \cite{OZ92}, \cite[Problem 4.4]{OSSZ11}).
It is known that \emph{unconditional} Schauder bases consisting of translates 
do not exist in any of these spaces, see \cite{OZ92},
 \cite{OSSZ11}, \cite{FOSZ14}.

A sequence $\Lam = \{\lam_n\}_{n=1}^{\infty}$ of real numbers
is said to be \emph{uniformly discrete} if 
\begin{equation}
\label{eq:ud}
\inf_{n \neq m}  |\lam_m - \lam_n| > 0.
\end{equation}
It was observed in \cite[Theorem 1]{OZ92}
that the condition \eqref{eq:ud} is necessary for 
 a system  of translates 
$\{T_{\lam_n} g \}_{n=1}^{\infty}$
to form a Schauder basis in $L^p(\R)$.
It is also known
 that in the space $L^1(\R)$, 
a system of uniformly discrete 
translates cannot even be complete,
see  \cite{BOU06}.
Hence, no Schauder bases of translates 
exist  in $L^1(\R)$.

\subsection{}
If $X$ is a Banach space with dual space $X^*$, then a system
$\{(x_n, x_n^*)\}_{n=1}^\infty$ in $X \times X^*$ is called a 
\emph{Schauder frame} (or a quasi-basis) if every $x\in X$ has a series expansion
\begin{equation}
\label{def_Sch_frame}
x=\sum_{n=1}^\infty x_n^* (x) x_n.
\end{equation}
If the series \eqref{def_Sch_frame}
converges unconditionally for every  $x \in X$, then 
$\{(x_n, x_n^*)\}$ is called an \emph{unconditional} Schauder frame.
We note that  the series expansion 
\eqref{def_Sch_frame} need not be unique  and the coefficient functionals $\{x_n^*\}$ 
 need not be biorthogonal to $\{x_n\}$.	Hence 
Schauder frames form a wider class of representation systems than Schauder bases.

It was shown in \cite{FOSZ14} that for every $p>2$ 
there exists an unconditional Schauder frame
in the space $L^p(\R)$ consisting of translates, i.e.\ of the form
$\{(T_{\lam_n} g, g_n^*)\}$ where $g \in L^p(\R)$,
$\{\lam_n\} \sbt \R$  and
$\{g_n^*\}$ are continuous linear functionals on $L^p(\R)$.
Moreover, $\{\lam_n\}$ may be chosen to be
 \emph{an arbitrary unbounded sequence}, and
 in particular, it may consist of integers, and  
 may increase arbitrarily fast.

To the contrary, we proved recently \cite{LT25a} that 
if $1 \le p \le 2$, then the space  $L^p(\R)$
does not admit an unconditional Schauder frame
consisting of translates.

In \cite[Section 4]{FPT21} a construction was given
of Schauder frames (not unconditional) 
 of translates  in $L^p(\R)$, $1 \le p < \infty$.
In fact, it was proved 
that whenever a system of translates $\{T_\lam g\}_{\lam \in \Lam}$
is complete in the space $L^p(\R)$,  then there exists a Schauder frame
$\{(T_{\lam_n} g, g_n^*)\}_{n=1}^{\infty}$ 
 such that $\{ \lam_n \} \sbt \Lam$.
 However the Schauder frames obtained by this construction 
are \emph{highly redundant}, as the sequence
 $\{ \lam_n \}$ is composed
of countably many blocks of finite size,
such that each block gets repeated a higher and higher
 number of times. The  sequence
 $\{ \lam_n \}$ thus ``runs back and forth'' through the set $\Lam$.

\subsection{}
The following question was posed in \cite[Problem 4.4]{OSSZ11}:
does there exist  a Schauder frame in $L^p(\R)$ formed by 
a \emph{uniformly discrete} sequence of translates?
The problem has remained open for  $1 < p \le 2$.
We recently obtained \cite{LT23} an affirmative answer for  $p=2$,
i.e.\ we constructed in the space $L^2(\R)$
a Schauder frame of the form
$\{(T_{\lam_n} g, g_n^*)\}_{n=1}^{\infty}$ 
where $\{\lam_n\}_{n=1}^{\infty}$ is a uniformly discrete real sequence.

The proof in \cite{LT23}
was based on the fact that the Fourier transform is a unitary mapping 
$L^2(\R) \to L^2(\R)$. This point breaks down
for $L^p(\R)$ spaces, $1 <p < 2$.

The main goal of the present paper is to extend the  result from  \cite{LT23}
to certain values
of $p$ within the range $1<p<2$. We will prove the following result:

\begin{thm}
\label{thm:main}
Let $p > (1+\sqrt{5})/2$. Then there exist a uniformly discrete
real sequence $\{\lambda_n\}_{n=1}^\infty$ satisfying
$|\lam_n| = n + o(1)$, 
a function $g \in L^p(\R)$ and a sequence $\{g^*_n\}_{n=1}^\infty$
in $ (L^p(\R))^*$,
such that every $f\in L^p(\R)$ admits a series expansion
\begin{equation}
\label{eq:F1.1}
f(x) = \sum_{n=1}^\infty g^*_n(f) g(x - \lambda_n)
\end{equation}
convergent in the $L^p(\R)$  norm.
\end{thm}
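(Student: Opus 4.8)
The plan is to pass to the Fourier side, where each translate $T_{\lam_n} g$ becomes a modulate $e^{-2\pi i \lam_n \xi}\,\ft g(\xi)$, and to construct the frame there. Writing $\ft f$ for the Fourier transform and using that $f \mapsto \ft f$ is an isometry from $L^p(\R)$ onto the space $A^p(\R)$ of Fourier transforms of $L^p$ functions, normed by $\napr{\ft\varphi} := \norm{\varphi}{L^p(\R)}$, the reconstruction \eqref{eq:F1.1} is equivalent to
\begin{equation}
\label{eq:plan-rec}
\ft f(\xi) = \ft g(\xi) \sum_{n=1}^\infty g_n^*(f)\, e^{-2\pi i \lam_n \xi}, \qquad \xi \in \R,
\end{equation}
holding in $A^p(\R)$. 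First I would define the analysis functionals by sampling a correlation: $g_n^*(f) = \dotprod{f}{T_{\lam_n} h} = (f * \tilde h)(\lam_n)$ for a suitable $h \in L^{p'}(\R)$, where $\tilde h(x) = \overline{h(-x)}$; this puts $g_n^* \in (L^p(\R))^*$ and makes the coefficients $g_n^*(f)$ equal to nonharmonic Fourier samples of $\ft f\,\overline{\ft h}$ at the nodes $\lam_n$. Note that with $\lam_n \in \Z$ the system $\{T_{\lam_n}g\}$ spans only the principal shift-invariant space generated by $g$, a proper closed subspace of $L^p(\R)$, and can never be complete; accordingly the synthesis in \eqref{eq:plan-rec} would then, by Poisson summation, reproduce only the $1$-periodization of $\ft f\,\overline{\ft h}$ rather than $\ft f$ itself. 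The perturbation $\eps_n := \lam_n - n \to 0$ is therefore not a technical convenience but the very mechanism by which a density-one family of translates is made complete, by coupling the frequency bands that the integer grid keeps apart.

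The main step, and the principal obstacle, is to prove that the partial-sum operators $S_N f = \sum_{n=1}^N g_n^*(f)\,T_{\lam_n} g$ are uniformly bounded on $L^p(\R)$; together with convergence of \eqref{eq:plan-rec} on the dense set of $f$ with smooth, compactly supported $\ft f$, this promotes the expansion to norm convergence for every $f \in L^p(\R)$. For $p=2$ uniform boundedness is immediate from Plancherel, but for $p<2$ the Fourier transform only provides the Hausdorff--Young embedding $L^p(\R)\hookrightarrow L^{p'}(\R)$ and no lower bound for the $L^p$ norm, so the estimate must be organised carefully. I would write $\norm{S_N f}{L^p(\R)} = \napr{\ft g \cdot \sigma_N(f)}$ with $\sigma_N(f)(\xi) = \sum_{n \le N} g_n^*(f)\, e^{-2\pi i \lam_n \xi}$, and control two operations in turn: the analysis map $f \mapsto (g_n^*(f))_n$ from $L^p(\R)$ into a sequence space, and the synthesis map that multiplies the reassembled series by $\ft g$ to land back in $A^p(\R)$. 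The analysis step is governed by a Hausdorff--Young correspondence for sampled transforms and costs one factor with exponent $p$; the synthesis step is, via the convolution theorem, the boundedness of convolution by $g$ on $L^p(\R)$ --- i.e.\ the assertion that $\ft g$ is an $L^p$ Fourier multiplier --- and when applied to the $\ell^p$-data produced by the first step it costs a further factor with exponent $p^2$. The two contributions combine multiplicatively, and the whole bound closes precisely when $\tfrac1p + \tfrac1{p^2} < 1$, equivalently $p^2 > p+1$, i.e.\ $p > (1+\sqrt5)/2$. This is exactly the golden-ratio threshold: it is the condition under which the first two terms of the geometric series $\sum_{k \ge 1} p^{-k}$ already sum to less than $1$, so that the two-step analysis/synthesis estimate is summable without invoking the full series, which would force the stronger requirement $p>2$.

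With the uniform bound available, I would finally realise the node set and the sign condition. Because only $\eps_n \to 0$ is demanded --- far weaker than the bounded perturbations of Kadec-type stability theorems --- each $\eps_n$ enters as an asymptotically negligible modulation $e^{-2\pi i \eps_n \xi}$ on its band; selecting $\{\eps_n\}$ along a fixed enumeration of the frequency bands makes $\{T_{\lam_n} g\}$ complete while keeping $|\lam_n| = n + o(1)$, and a summability estimate on $\{\eps_n\}$ holds the perturbed partial-sum operators within bounded operator-norm distance of the integer model, so the estimate of the previous paragraph persists. To obtain $g \ge 0$ I would take $g = |\psi|^2$ for an auxiliary $\psi$ carrying the prescribed band structure, so that nonnegativity is automatic, and then verify that the resulting $\ft g$ --- now an autocorrelation on the frequency side --- still satisfies the $L^p$-multiplier estimate that drives the golden-ratio bound. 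Assembling the three ingredients --- reconstruction on a dense subspace, the uniform $L^p$ bound valid precisely for $p > (1+\sqrt5)/2$, and the vanishing perturbation realising $|\lam_n| = n + o(1)$ --- yields the series expansion \eqref{eq:F1.1} with all the stated properties, including that $g$ may be taken nonnegative.
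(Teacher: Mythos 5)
You correctly identify the reduction to the Fourier side and the reason the perturbation $\lam_n = n + o(1)$ is indispensable (integer translates span only a proper shift-invariant subspace), but the core of the argument is missing, and the parts you supply in its place do not work. First, the coefficient functionals cannot simply be taken as correlation samples $g_n^*(f) = (f * \tilde h)(\lam_n)$: nothing in your sketch shows that synthesis from such samples reproduces $f$, and the paper's functionals are of a genuinely different nature --- they are built from the biorthogonal functionals $\psi_k$ of a \emph{perturbed Schauder basis} $\{w \cdot \tilde{P}_k \cdot Q_k\}$ of $A^p(\R)$, multiplied by explicit Fourier coefficients $d_{n,k}\ft{P}_k(m)$ (see \eqref{eq:coefffunc}). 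Second, and more seriously, the claimed "two-step analysis/synthesis estimate costing $\tfrac1p + \tfrac1{p^2} < 1$" is not an argument: no inequality is actually proved, the exponents $p$ and $p^2$ are not derived from any stated estimate, and the identification of the threshold with the partial sums of $\sum_k p^{-k}$ is numerology. In the paper the golden-ratio condition arises from an entirely different mechanism, the localization lemma (\lemref{lemP3.3}): one needs a Riesz product $\gam = \prod_{j<N} f(\nu^j t)$ with $\|\gam - 1\|_{A^p(\T)}$ small, which forces $N \asymp h^{1-p}$, simultaneously with $\|\gam \cdot S_l(P)\|_{A^p(\T)} = O(1)$, which requires $N^{-p} h^{-1}$ bounded; these are compatible exactly when $h^{p(p-1)-1} \to 0$, i.e.\ $p^2 - p - 1 > 0$. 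Without this lemma (or a substitute) there is no control of the partial sums of \eqref{eq:F1.1} inside a single block, which is where the whole difficulty of the problem lives for $p < 2$.

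Two further points. The paper does not establish uniform boundedness of the raw partial-sum operators and then pass to a dense subspace; it decomposes an arbitrary partial sum into a Schauder-basis part $S'$, a within-block part $S''$ controlled by condition \ref{tlo:iv} of \lemref{lemP3.3} together with a separation-of-spectra limit (\lemref{sep-specR}), and a boundary part $S'''$ controlled by the smallness $\|\ft{P}_k\|_\infty < \eps_k$. Your sketch has no counterpart to any of these three estimates. Finally, the nonnegativity device $g = |\psi|^2$ does not interface with the rest of your construction (replacing $\ft g$ by an autocorrelation destroys whatever multiplier properties you assumed, and $|\psi|^2 \in L^p$ requires $\psi \in L^{2p}$); the paper instead arranges nonnegativity on the Fourier side, by choosing $u_0$ with $\ft{u}_0 \ge 0$ (\lemref{lem:nnglev24}) and strengthening the localization lemma so that the Riesz products $\gam_k$ have nonnegative Fourier coefficients (\lemref{lemN5.3}), whence $g = \ft{w} \ge 0$.
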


The question whether the result holds for every $p>1$, remains open.

\subsubsection*{Remarks}
1. The result can be strengthened in various directions;
see \cite[Section 4]{LT23} where several extensions of the result 
for $p=2$ are given, some of which may still be valid
for the result of the present paper.

2. Motivated by recent interest in nonnegative
coordinate systems in $L^p(\R)$ spaces, see
\cite{PS16},  \cite{JS15}, \cite{FPT21},
one may ask whether there exists a Schauder 
frame  in the space $L^p(\R)$  formed 
by a uniformly discrete sequence of translates 
of some \emph{nonnegative} function $g$.
We will show 
that  by a certain modification of our
proof, this additional requirement of 
nonnegativity of $g$ can 
be achieved in \thmref{thm:main}
(see \secref{sec:nonneg}).


\section{Preliminaries}
\label{secP1}

In this section we present some necessary background
and fix notation that will be used throughout the paper.

\subsection{}
The \emph{Schwartz space}  $\S(\R)$ consists of all infinitely smooth
functions $\varphi$ on $\R$ such that for each $n,k \geq 0$, the seminorm
\begin{equation}
\|\varphi\|_{n,k} := \sup_{x \in \R} (1+|x|)^n  |\varphi^{(k)}(x)|
\end{equation}
is finite. A \emph{tempered distribution} is a 
linear functional  on the Schwartz space  
which is continuous with respect to the 
topology generated by this family of seminorms.
We use  $\alpha(\varphi)$ to denote  the action of
a  tempered distribution $\alpha$ on a
Schwartz function $\varphi$.

We denote by  $\supp(\al)$ the closed
support of a tempered distribution $\al$.

If $\varphi$ is a Schwartz function on $\R$ then we define its Fourier transform by
\begin{equation}
\ft{\varphi}(x)=\int_{\R} \varphi(t) e^{-2 \pi i xt} dt.
\end{equation}
The Fourier transform of a  tempered distribution 
$\alpha$ is defined by 
$\ft{\alpha}(\varphi) = \alpha(\ft{\varphi})$.

If $\alpha$ is a tempered distribution  and if
$\varphi$  is a 
Schwartz function, then 
the product $\alpha \cdot \varphi$ is 
a tempered distribution   defined
by $(\alpha \cdot \varphi)(\psi) =
\alpha(\varphi \cdot \psi)$,
$\psi \in \S(\R)$.
The convolution
$\alpha \ast \varphi$ 
of a tempered distribution  
 $\alpha$ and 
a Schwartz function $\varphi$ is 
 an infinitely smooth function
which is also
a tempered distribution, and
whose Fourier transform is $\ft{\al} \cdot \ft{\varphi}$.

\subsection{}
Let $A^p(\T)$, $1\le p < \infty$, denote the Banach space of 
 Schwartz distributions $\alpha$ on the circle $\T = \R / \Z$
whose Fourier coefficients $\{\ft{\alpha}(n)\}$, $n \in \Z$,
belong to $\ell^p(\Z)$, endowed with the norm 
$\|\alpha\|_{A^p(\T)} := \|\ft{\alpha}\|_{\ell^p(\Z)}$.
For $p=1$ this is the classical Wiener algebra $A(\T)$ 
of continuous functions with an absolutely convergent
Fourier series.

 We also use $A^p(\R)$, $1\le p < \infty$, to denote
 the Banach space of 
tempered distributions $\alpha$ on $\R$
whose Fourier transform $\ft{\alpha}$
is in $L^p(\R)$,   with the norm 
$\|\alpha\|_{A^p(\R)} := \|\ft{\alpha}\|_{L^p(\R)}$. 

Note that $A^1(\T)$ and $A^1(\R)$ are function spaces,
continuously embedded in $C(\T)$ and $C_0(\R)$ respectively.
Similarly, for $1 < p \le 2$ the space $A^p$ (on either $\T$ or $\R$)
is a function space, continuously embedded in $L^{q}$,
 $q = p/(p-1)$, by the Hausdorff-Young inequality.
On the other hand, $A^p$ is not a function space for $p>2$.

\subsection{}
For $0 < h < 1/2$  we denote by $\Delta_h$ the ``triangle function'' on $\T$ vanishing
outside $(-h, h)$, linear on $[-h, 0]$ and on $[0,h]$, and satisfying
$\Delta_h(0)=1$. Then $\ft{\Delta}_h(0) = h$, and
\begin{equation}
\label{estim_delta}
\|\Delta_h\|_{A^p(\T)} \leq h^{(p-1)/p}, \quad 1 \le p < \infty.
\end{equation}

Indeed, to obtain the estimate \eqref{estim_delta} one can use the fact 
that Fourier coefficients of $\Delta_h$ are real and nonnegative,
hence $\|\Delta_h\|_{A(\T)} = \sum_n  \ft{\Delta}_h(n)=\Delta_h(0)=1$.
 Moreover, we have $\ft{\Delta}_h(n) \le \int_{\T} \Delta_h(t) dt = h$ for every $n\in\Z$,
and so $\|\Delta_h\|_{A^p(\T)}^p = \sum_n  \ft{\Delta}_h(n)^p\le h^{p-1}$.

For $0 < h < 1/4$ we also use $\tau_h$ to
denote the ``trapezoid function'' on $\T$ which vanishes
outside $(-2h, 2h)$, is equal to $1$ on $[-h, h]$, and
is linear on $[-2h, -h]$ and on $[h,2h]$. Then 
  $\ft{\tau}_h(0) = 3h$, and 
\begin{equation}
\label{eq:tauhap}
\|\tau_h\|_{A^p(\T)} \leq 3 h^{(p-1)/p}, \quad 1 \le p < \infty,
\end{equation}
which follows from \eqref{estim_delta} and the fact that
$\tau_h(t) = \Delta_h(t + h) + \Delta_h(t) + \Delta_h(t-h)$.

\subsection{}
By a \emph{trigonometric polynomial} we mean a finite sum of the form
\begin{equation}
\label{eq:P1.1}
P(t) = \sum_{j} a_j e^{2\pi i \sigma_j t}, \quad t \in \R,
\end{equation}
where $\{\sigma_j\}$ are distinct real numbers, and $\{a_j\}$ are complex numbers. 

By the \emph{spectrum} of $P$ we mean the set 
$\spec(P) := \{\sigma_j : a_j \neq 0\}$.
We observe that if  $P$
  has integer spectrum, $\spec(P) \sbt \Z$,
  then $P$ is $1$-periodic, that is, 
  $P(t+1) = P(t)$. In this case, $P$ may be
  considered also as a function on $\T$.

By the \emph{degree} of $P$ we mean the number
$\deg(P) := \min \{r \ge 0 : \spec(P) \sbt [-r,r]\}$.

The (symmetric) \emph{partial sum}
$S_{r}(P)$ of a trigonometric polynomial
\eqref{eq:P1.1} is defined by 
\begin{equation}
\label{eq:P1.2}
 S_{r}(P)(t) = \sum_{   | \sig_j| \le r} a_j e^{2 \pi i \sig_j t}.
\end{equation}

We say that $P$ is \emph{analytic}
if we have $\spec(P) \sbt [0, +\infty)$.

\subsection{} 
For a trigonometric polynomial
\eqref{eq:P1.1} we use the notation
$\|\ft{P}\|_p = (\sum_j |a_j|^p)^{1/p}$
and $\| \ft{P} \|_\infty = \max_j |a_j|$.
If $f \in A^p(\R)$ and $P$ is a trigonometric polynomial, then 
\begin{equation}
\label{est_prod_2}
\|f \cdot P \|_{A^p(\R)} \le \|\ft{P} \|_{1}  \cdot \|f  \|_{A^p(\R)},
\end{equation}
which follows by an application of the triangle inequality in the space $L^p(\R)$,
using the fact that the Fourier transform of $f \cdot P$ is  given by
$\sum_{j} a_j \ft{f}(x-\sigma_j)$.

In a similar way one can establish the   inequality 
\begin{equation}
	\label{est_prod_1}
	\|f \cdot g \|_{A^p(\T)} \le \|f  \|_{A(\T)} \cdot
	\|g  \|_{A^p(\T)}, \quad f \in A(\T), \; g \in A^p(\T).
\end{equation}

\subsection{}
We introduce an auxiliary norm $\nmb{\cdot}$ on the Schwartz space $\S(\R)$, defined by
\begin{equation}
\nmb{u}  := 10 \cdot \sup_{x \in \R}  (1 + x^2) |\ft{u}(x)|, \quad u \in \S(\R).
\end{equation}

If $u \in \S(\R)$ and
$f\in  A^p(\T)$, $1 \le p < \infty$,
then $f$ may be
considered also as a $1$-periodic tempered distribution  on $\R$,
so the product $u \cdot f$ makes sense and is well defined.

\begin{lemma}
\label{ufprodst}
Let	$u \in \S(\R)$ and
$f\in  A^p(\T)$, $1 \le p < \infty$. Then
\begin{equation}
\label{est_prod_3}
\| u \cdot f \|_{A^p(\R)} \le \nmb{u}   \| f  \|_{A^p(\T)}.
\end{equation}	
\end{lemma}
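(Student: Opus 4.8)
The plan is to pass to the Fourier side and reduce the claim to a discrete convolution estimate. By the convolution theorem recorded in the preliminaries, the Fourier transform of the product $u \cdot f$ is the convolution $\ft{u} \ast \ft{f}$. Since $f \in A^p(\T)$ is a $1$-periodic tempered distribution, its Fourier transform is $\ft{f} = \sum_{n \in \Z} \ft{f}(n)\, \delta_n$, and so
\[
\ft{(u \cdot f)}(x) = \sum_{n \in \Z} \ft{f}(n)\, \ft{u}(x - n), \quad x \in \R ,
\]
the series converging absolutely and locally uniformly because $\{\ft{f}(n)\} \in \ell^p(\Z) \sbt \ell^\infty(\Z)$ while $\ft u$ is Schwartz. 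Thus I must bound the $L^p(\R)$ norm of this superposition of integer translates of $\ft{u}$ by $\nmb{u}\,\norm{\ft{f}}{\ell^p(\Z)}$. The naive route through Minkowski's inequality is too lossy: it yields $\norm{\ft f}{\ell^1(\Z)}$ in place of the $\ell^p$ norm. The whole point, and the only genuine obstacle, is to exploit the rapid decay of $\ft u$ to \emph{decouple} the translates and recover the $\ell^p$ norm.

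That decay is quantified directly by the definition of the auxiliary norm: $\nmb{u} = 10 \sup_x (1+x^2)|\ft u(x)|$ gives the pointwise bound $|\ft u(x)| \le \tfrac{1}{10}\nmb{u}\,(1+x^2)^{-1}$. Writing $c_n := |\ft f(n)|$, we get
\[
\bigl|\ft{(u \cdot f)}(x)\bigr| \le \frac{\nmb{u}}{10} \sum_{n \in \Z} \frac{c_n}{1 + (x-n)^2}, \quad x \in \R .
\]
I then partition $\R$ into the unit intervals $[m, m+1)$, $m \in \Z$. For $x$ in such an interval and any $n$, the argument $x - n$ lies in $[m-n, m-n+1)$, hence $(1+(x-n)^2)^{-1} \le v_{m-n}$, where $v_\ell := \sup_{y \in [\ell, \ell+1)}(1+y^2)^{-1}$. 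So on $[m,m+1)$ the right-hand side is dominated by the constant $\tfrac{1}{10}\nmb{u}\,(c \ast v)(m)$, with $c \ast v$ the discrete convolution. Integrating over each unit interval and summing over $m$ gives
\[
\napr{u \cdot f} = \norm{\ft{(u\cdot f)}}{L^p(\R)} \le \frac{\nmb{u}}{10}\, \norm{c \ast v}{\ell^p(\Z)} .
\]

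To finish I would apply Young's inequality for discrete convolution, $\norm{c \ast v}{\ell^p(\Z)} \le \norm{c}{\ell^p(\Z)}\,\norm{v}{\ell^1(\Z)}$, reducing everything to the elementary check that $\norm{v}{\ell^1(\Z)} \le 10$. Indeed $v_\ell = (1+\ell^2)^{-1}$ for $\ell \ge 0$ and $v_\ell = (1+(\ell+1)^2)^{-1}$ for $\ell \le -1$, so by symmetry
\[
\sum_{\ell \in \Z} v_\ell = 2\sum_{k \ge 0}\frac{1}{1+k^2} < 2\Bigl(1 + \frac{\pi^2}{6}\Bigr) < 10 .
\]
Combining the last three displays yields $\napr{u\cdot f} \le \tfrac{1}{10}\nmb{u}\cdot 10 \cdot \norm{c}{\ell^p(\Z)} = \nmb{u}\,\napt{f}$, which is the assertion. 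I expect the decoupling step to be the crux; the factor $10$ built into the definition of $\nmb{\cdot}$ is precisely the slack that absorbs $\norm{v}{\ell^1(\Z)} \approx 5.3$ and delivers the clean constant.
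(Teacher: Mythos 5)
Your proof is correct. It takes a different route from the paper's: the paper estimates $\int |\sum_n \ft{f}(n)\ft{u}(x-n)|^p\,dx$ directly by the H\"older splitting $|\ft{f}(n)\ft{u}(x-n)| = \bigl(|\ft{f}(n)|^p|\ft{u}(x-n)|\bigr)^{1/p}|\ft{u}(x-n)|^{(p-1)/p}$, which reduces the claim to the two bounds $\sup_x \sum_n |\ft{u}(x-n)| \le \nmb{u}$ and $\|\ft{u}\|_{L^1(\R)} \le \nmb{u}$; you instead discretize $\R$ into unit intervals, majorize $|\ft{(u\cdot f)}|$ on $[m,m+1)$ by the discrete convolution $(c\ast v)(m)$, and invoke Young's inequality $\ell^p \ast \ell^1 \to \ell^p$. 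The two arguments are cousins --- the paper's H\"older computation is exactly the standard proof of Young's inequality carried out in the continuous setting --- but yours cleanly isolates the decoupling mechanism as a named black box at the cost of an extra discretization step, while the paper's is shorter and avoids introducing the auxiliary sequence $v$. Both correctly identify that the factor $10$ in the definition of $\nmb{\cdot}$ is there to absorb the relevant $\ell^1$/$L^1$ sums of $(1+x^2)^{-1}$ (your $\|v\|_{\ell^1} < 2(1+\pi^2/6)$ versus the paper's implicit $\sup_x\sum_n(1+(x-n)^2)^{-1}$ and $\int(1+x^2)^{-1}dx = \pi$), and both degenerate gracefully at $p=1$.
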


\allowdisplaybreaks

Indeed,  an application of H\"{o}lder's inequality yields
\begin{align}
\| u \cdot f \|_{A_p(\R)}^p &= \int \Big| \sum_{n\in\Z} \ft{f}(n) \ft{u}(x-n) \Big|^p\, dx \\
&\leq \int \Big( \sum_{n\in\Z} |\ft{f}(n)|^p |\ft{u}(x-n)| \Big)
 \Big( \sum_{n\in\Z} |\ft{u}(x-n)| \Big)^{p-1} dx \\ 
&\leq \nmb{u}^{p-1} \int \Big( \sum_{n\in\Z} |\ft{f}(n)|^p |\ft{u}(x-n)| \Big)  dx \\ 
&\le \nmb{u}^{p-1} \nmb{u} \sum_{n\in\Z} |\ft{f}(n)|^p = \nmb{u}^p \|f\|_{A^p(\T)}^p.
\end{align}

If $f\in  A^p(\T)$ and $\nu$ is a positive integer, then we use
  $f_\nu$ to denote the element of the space
$A^p(\T)$ whose Fourier series is given by
$\sum_{n \in \Z} \ft{f}(n) e^{2 \pi i n \nu t}$.

\begin{lemma}
\label{sep-specR}
Let	$u \in \S(\R)$ and  $f\in  A^p(\T)$, $1 \le p < \infty$. Then
\begin{equation}
\label{eq_sep-specR}
\lim_{\nu\to \infty}\|u \cdot f_\nu \|_{A^p(\R)} =\|u\|_{A^p(\R)}  \|f\|_{A^p(\T)}.
\end{equation}
\end{lemma}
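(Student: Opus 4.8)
The plan is to pass to the Fourier transform side, reduce to the case of a trigonometric polynomial by density, and then exploit that the translates of $\ft{u}$ separate as $\nu \to \infty$. First I would record the Fourier transform of the product. Since $f_\nu$ is the $1$-periodic distribution whose Fourier coefficients are supported on $\nu\Z$, with $\ft{f_\nu}(n\nu) = \ft{f}(n)$, the Fourier transform of $u \cdot f_\nu$ is the convolution $\ft{u} * \ft{f_\nu}$, so that
\[
\ft{u \cdot f_\nu}(x) = \sum_{n \in \Z} \ft{f}(n)\, \ft{u}(x - n\nu),
\]
and hence
\[
\napr{u \cdot f_\nu}^p = \int_\R \Big| \sum_{n\in\Z} \ft{f}(n)\, \ft{u}(x - n\nu)\Big|^p dx.
\]
The target quantity is $\napr{u}^p \napt{f}^p = \|\ft{u}\|_{L^p(\R)}^p \sum_n |\ft{f}(n)|^p$, and the heuristic is that as $\nu \to \infty$ the bumps $\ft{u}(\cdot - n\nu)$ drift apart, so the $L^p$ norm of their superposition should asymptotically decouple into the $\ell^p$ combination of the individual norms.

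Second, I would reduce to the case where $f$ is a trigonometric polynomial with integer spectrum, so that $\ft{f}$ has finite support. The key observation is that the map $f \mapsto f_\nu$ preserves the $A^p(\T)$ norm, since it merely relocates the Fourier coefficients; thus $\napt{f_\nu} = \napt{f}$. Given $\eps > 0$, choose a trigonometric polynomial $P$ with $\napt{f - P} < \eps$, which is possible because finitely supported sequences are dense in $\ell^p(\Z)$ for $p < \infty$. Applying \lemref{ufprodst} to $u \cdot (f - P)_\nu$ yields
\[
\napr{u \cdot f_\nu - u \cdot P_\nu} \le \nmb{u}\, \napt{(f-P)_\nu} = \nmb{u}\, \napt{f - P} < \nmb{u}\,\eps
\]
uniformly in $\nu$, and the triangle inequality in $A^p(\R)$ together with $|\napt{P} - \napt{f}| < \eps$ then transfers the limit from $P$ to $f$ by a standard $3\eps$ argument.

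Third, and this is the heart of the matter, I would establish the separation estimate for the finite sum. Writing $\phi = \ft{u}$, which is a Schwartz function and hence rapidly decaying, and letting $S = \supp \ft{P}$ be finite, I would fix $R$ so large that $\int_{|y| > R} |\phi(y)|^p\, dy$ is negligible, and then take $\nu > 2R$ so that the intervals $I_m = (m\nu - R, m\nu + R)$, $m \in S$, are pairwise disjoint. On $I_m$ the sum equals $\ft{P}(m)\phi(x - m\nu)$ plus a tail $\sum_{n \ne m} \ft{P}(n)\phi(x - n\nu)$ in which every argument has modulus at least $\nu - R$; by the rapid decay of $\phi$ and the finiteness of $S$, this tail is uniformly small on $I_m$. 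Estimating the contribution of each $I_m$ and checking that the integral over the complement $\R \setminus \bigcup_{m \in S} I_m$ is negligible yields
\[
\int_\R \Big| \sum_{n \in S} \ft{P}(n)\phi(x - n\nu)\Big|^p dx \longrightarrow \|\phi\|_{L^p(\R)}^p \sum_{n\in S} |\ft{P}(n)|^p = \napr{u}^p\, \napt{P}^p.
\]

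The main obstacle is precisely this last step: making the decoupling quantitative, that is, controlling both the cross terms on each block $I_m$ and the residual mass on the complementary region, uniformly enough to pass to the limit. Everything here rests on the Schwartz decay of $\ft{u}$, while the reduction to finitely supported $\ft{f}$ is what keeps the number of interacting bumps finite, so that the tail estimate holds uniformly and the constants do not blow up.
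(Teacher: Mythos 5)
Your argument is correct, but it reverses the roles of the two approximations compared with the paper. The paper keeps $f$ general and instead approximates $u$ in the $\nmb{\cdot}$ norm by a Schwartz function $v$ whose Fourier transform $\ft{v}$ is compactly supported (a smooth truncation of $\ft{u}$ does this); once $\supp \ft{v} \subset [-R,R]$ and $\nu > 2R$, the translates $\ft{v}(\cdot - n\nu)$ have pairwise \emph{disjoint} supports, so
$\int_\R \bigl| \sum_n \ft{f}(n) \ft{v}(x-n\nu) \bigr|^p dx = \|\ft{v}\|_{L^p}^p \sum_n |\ft{f}(n)|^p$
holds \emph{exactly} for all large $\nu$, with no block decomposition, no cross terms, and no restriction to finitely many $n$; the error $\napr{(u-v)\cdot f_\nu}$ is then controlled uniformly in $\nu$ by \lemref{ufprodst}, exactly as you control $\napr{u\cdot(f-P)_\nu}$. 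You instead fix $u$, truncate $f$ to a trigonometric polynomial $P$ (legitimate, since $f\mapsto f_\nu$ is an $A^p(\T)$-isometry and \lemref{ufprodst} gives the uniform-in-$\nu$ error bound), and then must do the quantitative almost-orthogonality by hand: the cross terms on each block $I_m$ are $O(\sup_{|y|\ge \nu-R}|\ft{u}(y)|)$ by Schwartz decay and finiteness of $\spec(P)$, and the integral over $\R\setminus\bigcup I_m$ is bounded via convexity by $|S|^p \|\ft{P}\|_\infty^p \int_{|y|>R}|\ft{u}(y)|^p dy$, which is small for $R$ large; taking $\nu\to\infty$ first and then $R\to\infty$ closes the argument. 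Both routes are sound and of similar length; the paper's choice of which function to truncate makes the limiting computation trivial (an identity rather than an estimate), while yours avoids having to verify density of band-limited functions in the $\nmb{\cdot}$ norm at the price of the two tail estimates, which you correctly identify as the only delicate point.
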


This   is obvious if $u$ has a compactly supported Fourier transform $\ft{u}$.
In the general case, \eqref{eq_sep-specR} 
can be proved by approximating   $u$ in the $\nmb{\cdot}$ norm
by a Schwartz function $v$ with a compactly supported Fourier transform $\ft{v}$, 
and   using the inequality \eqref{est_prod_3} to estimate the error.

\subsection{}
If  $\alpha$ is a  tempered distribution on $\R$
and $P$ is a trigonometric polynomial, then the product
$\al \cdot P$ is  a  tempered distribution 
defined by $(\al \cdot P)(\varphi)
= \al (P \cdot \varphi)$, $\varphi \in \S(\R)$.

\begin{lemma}
\label{lem:uniqaqprod}
Let $\alpha \in  A^p(\R)$, $1 \le p < \infty$, and 
suppose that $P$ is a nonzero trigonometric polynomial on $\R$.
If $\alpha \cdot P = 0$, then $\alpha = 0$.
\end{lemma}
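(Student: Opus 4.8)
The plan is to transfer the hypothesis into information about the support of $\al$, and then to exploit that $\ft{\al}$ is an honest $L^p$ function rather than merely a distribution.

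First I would show that $\al \cdot P = 0$ forces $\supp(\al)$ to be contained in the zero set $Z := \{t \in \R : P(t) = 0\}$. Indeed, on the open set $U = \{P \neq 0\}$ the reciprocal $1/P$ is smooth, so for any $\varphi \in C_c^\infty(U)$ the function $\rho := \varphi/P$ extends by zero to an element of $\S(\R)$ satisfying $P \cdot \rho = \varphi$. Applying the defining identity $(\al \cdot P)(\rho) = \al(P \cdot \rho)$ together with $\al \cdot P = 0$ yields $\al(\varphi) = 0$. Hence $\al$ vanishes on $U$, i.e.\ $\supp(\al) \sbt Z$. Since $P$ is a nonzero trigonometric polynomial, it is the restriction to $\R$ of the nonzero entire function $z \mapsto \sum_j a_j e^{2\pi i \sig_j z}$ (the exponentials $e^{2\pi i \sig_j z}$ being linearly independent), so its zeros are isolated and $Z$ is a discrete closed set.

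Next I would localize at each point of $Z$. Fix $t_0 \in Z$ and choose $\psi \in C_c^\infty(\R)$ with $\supp(\psi) \cap Z = \{t_0\}$ and $\psi \equiv 1$ on a neighborhood of $t_0$. Then $\psi \al$ is a tempered distribution supported at the single point $t_0$, so by the standard structure theorem it is a finite combination $\psi \al = \sum_{m=0}^{M} c_m \delta_{t_0}^{(m)}$. Taking Fourier transforms, on one hand $\ft{\psi\al} = \ft{\psi} \ast \ft{\al}$ lies in $L^p(\R)$, because $\ft{\al} \in L^p(\R)$ and $\ft{\psi} \in L^1(\R)$ (Young's inequality); on the other hand $\ft{\psi\al}(x) = \big(\sum_{m=0}^{M} c_m (2\pi i x)^m\big)\, e^{-2\pi i t_0 x}$, whose modulus equals $|q(x)|$ for the polynomial $q(x) = \sum_m c_m (2\pi i x)^m$. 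A polynomial lies in $L^p(\R)$, $p<\infty$, only when it vanishes identically, so all $c_m = 0$ and $\psi\al = 0$; as $\psi \equiv 1$ near $t_0$, this means $\al$ vanishes near $t_0$.

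Since $t_0 \in Z$ is arbitrary and $\al$ already vanishes off $Z$, it follows that $\al = 0$. The main obstacle is the middle step: passing from the support condition to the conclusion requires combining the structure theorem for distributions supported at a point with the observation that $\ft{\al}$ is a genuine $L^p$ function. It is precisely this interplay between the distributional side (support contained in $Z$) and the function-theoretic side (Fourier transform in $L^p$) that must be handled with care, and the cutoff $\psi$ is what lets me reduce to a single point and thereby avoid any issue with $Z$ being infinite. The division step establishing $\supp(\al) \sbt Z$ and the isolation of the zeros of $P$ are routine once the entire extension of $P$ is invoked.
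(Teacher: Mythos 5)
Your proof is correct and follows essentially the same route as the paper's: reduce to $\supp(\al)$ lying in the discrete zero set of $P$, localize at a point with a smooth cutoff, invoke the structure theorem for point-supported distributions, and rule out a nonzero polynomial Fourier transform because $\ft{\al}$ (hence its convolution with a Schwartz function) lies in $L^p(\R)$. You merely spell out two steps the paper leaves implicit — the division by $P$ on $\{P\neq 0\}$ and the entire-extension argument for discreteness of the zero set — so there is nothing to add.
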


\begin{proof}
The condition $\alpha \cdot P  = 0$
implies that $\supp(\alpha)$ is contained in the
set of zeros of $P$, which is a discrete closed set in $\R$.
Let $\chi$ be a Schwartz function on $\R$ with
$\chi(t) =1$ in a neighborhood of a point
$a \in \supp(\alpha)$, and 
$\chi(t) =0$ in a neighborhood of $\supp(\alpha) \setminus \{a\}$.
 The distribution ${\al} \cdot {\chi}$ is then
 supported at the point $a$ and
 coincides with ${\al}$ in a neighborhood of $a$.
  It is well known that a distribution supported at a single
  point $a$  is a finite linear combination of derivatives
  of Dirac's measure at the point $a$. In turn, this
  implies that $(\ft{\al} \ast \ft{\chi})(x) = e^{-2 \pi i a x} q(x)$
  where $q$ is a polynomial.
    But $\ft{\al} \ast \ft{\chi} \in L^p(\R)$, so this is possible only if 
      $\ft{\al} \ast \ft{\chi}$ is zero, and hence
      $\al \cdot \chi$ is zero. We conclude that ${\al}$ must
vanish in a neighborhood of any point $a \in \supp(\al)$,
a contradiction unless  $\alpha = 0$.
\end{proof}

\subsection{} \label{subsec:complete}
We will use the known fact that 
in the space $L^p(\R)$, $p>1$, there exist
\emph{complete systems} formed by uniformly discrete
translates of a single function.

It was proved in \cite{AO96} that
for every $p>2$, there is a function $g \in L^p(\R)$
whose translates  by the positive integers 
$\{g(x-n)\}$, $n=1,2,3,\dots$,
span the whole space $L^p(\R)$, that is,
these translates are complete in  $L^p(\R)$.
A similar result was proved also in the space $C_0(\R)$.
On the other hand, no system of integer translates 
can be complete in $L^p(\R)$ for $1 \le p \le 2$
 (see e.g.\  Example 11.2 and Corollary 12.26 in \cite{OU16}).

However, it was proved in
\cite{Ole97} that for any ``small perturbation'' 
of the integers,
\begin{equation}
\label{eq:pertz}
\lam_n = n + \alpha_n, \quad  
0 \ne \alpha_n \to 0 \quad (|n| \to +\infty)
\end{equation}
there exists  $g \in L^2(\R)$
 such that  the system
$\{g(x-\lam_n)\}$, $n \in \Z$,
is complete in $L^2(\R)$. It was moreover
 shown in \cite{OU04} 
that if the perturbations are exponentially small, i.e.\
\begin{equation}
\label{eq:exppertn}
0 < |\alpha_n | < C r^{|n|}, \quad n \in \Z,
\end{equation}
for some $0<r<1$ and $C>0$, 
then  $g$ can be chosen in the Schwartz class.

More recently, by a development of the approach
from \cite{OU04}, the latter result was
extended  to $L^p(\R)$ spaces \cite{OU18a},  namely,
it was proved that there is a Schwartz  function
 $g$ such that if the sequence
 $\{ \lam_n \}$, $n \in \Z$, 
satisfies \eqref{eq:pertz} and \eqref{eq:exppertn} 
then the system  $\{g(x-\lam_n)\}$, $n \in \Z$,
is complete in $L^p(\R)$ for every $p>1$
(see also \cite{OU18b}).

In a recent paper \cite{Lev25}, a different approach was given
for constructing a function $g$ which spans the
space  $L^p(\R)$, $p>1$, by uniformly discrete translates.
In fact, this approach allows to use only positive translates,
and  moreover the completeness remains true for any subsystem 
obtained by the removal of a finite number of elements.

\begin{thm}[{see \cite[Theorem 1.1]{Lev25}}]
\label{thm:lev24}
There is a real sequence 
$\{\lam_n\}_{n=1}^{\infty}$ satisfying
$ \lam_n = n +  o(1)$,  and there is 
a Schwartz function $g$ on $\R$, 
such that for any $N$ the system 
\begin{equation}
\label{eq:ftranslama1}
\{g(x - \lam_n)\}, \; n>N,
 \end{equation}
is complete in the space $L^p(\R)$ for every $p>1$.
\end{thm}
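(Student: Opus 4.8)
```latex
\subsection*{Proof proposal for \thmref{thm:lev24}}

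The plan is to reduce the completeness problem for the system of translates to a statement about a single function and a perturbed integer sequence, exploiting the dual characterization of completeness. A system $\{g(x-\lam_n)\}_{n>N}$ is complete in $L^p(\R)$ if and only if the only functional $h \in (L^p(\R))^* = L^q(\R)$, $q = p/(p-1)$, that annihilates all these translates is $h = 0$. Writing out the annihilation condition and passing to the Fourier side, one sees that $\int g(x-\lam_n) \overline{h(x)}\,dx = 0$ for all $n > N$ becomes a condition that the function $F(\xi) := \ft{g}(\xi)\,\overline{\ft{h}(\xi)}$ (or rather the relevant correlation) has vanishing ``generalized Fourier coefficients'' sampled at the perturbed frequencies $\{\lam_n\}$. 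The heart of the matter is then a uniqueness/density result: a nonzero band-limited or suitably decaying function cannot have all its samples at a perturbed integer sequence vanish, unless one can subtract off the finitely many removed modes.

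First I would construct $g$ explicitly via its Fourier transform $\ft{g}$, choosing it to be a Schwartz function whose support or zero set is tailored so that multiplication by $\ft{g}$ does not destroy the sampling uniqueness; the perturbed sequence $\lam_n = n + o(1)$ should be chosen so that the exponentials $\{e^{2\pi i \lam_n \xi}\}$ form a system that is complete (even after deleting $N$ terms) in an appropriate weighted space on a fundamental domain. Following the approach of the cited works \cite{OU04}, \cite{OU18a}, the construction should force $\ft{g}$ to interact with the gaps created by the perturbations $\alpha_n = \lam_n - n$: the exponential smallness of $\alpha_n$ in \eqref{eq:exppertn} is what allows $g$ to remain in the Schwartz class while still generating completeness. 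The robustness under removal of finitely many elements reflects that deleting $N$ terms from a complete-and-minimal-type system leaves a system with finite excess, which remains complete provided the generating function is chosen with enough ``overcompleteness'' built in.

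The key analytic step is to show that if $h \in L^q(\R)$ satisfies the annihilation condition, then the entire function obtained from $\ft{h}$ (after pairing with $\ft{g}$) must vanish identically. I would set up a periodization argument: summing the correlation of $g$ and $h$ over the lattice produces a periodic distribution, and the perturbed sampling condition translates, via the machinery of \lemref{lem:uniqaqprod} and the $A^p$-space estimates in \eqref{est_prod_2}--\eqref{est_prod_3}, into the statement that a certain element of $A^q$ is annihilated by a nonzero trigonometric-polynomial-like factor, hence is itself zero. The transition from $L^2$ (where \cite{Ole97} works via Paley--Wiener and Beurling density) to general $p > 1$ is exactly the point where one needs the $A^p$ formalism rather than orthogonality, since the Fourier transform is no longer an isometry.

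The hard part will be controlling the completeness uniformly in $p$ and simultaneously uniformly under removal of the first $N$ terms, while keeping $g$ in the Schwartz class with $\lam_n = n + o(1)$. In the $L^2$ setting one has the comfort of Hilbert-space duality and exact Parseval identities; for $p \ne 2$ the estimates must be pushed through the Hausdorff--Young and $A^p$ inequalities, and the exponential decay of the perturbations \eqref{eq:exppertn} is essential to guarantee that the resulting entire function has the growth control needed to apply a uniqueness theorem. I expect the main obstacle to be verifying that a single Schwartz function $g$ works for \emph{all} $p > 1$ at once, which is why I would lean on the explicit spectral construction of \cite{Lev25} rather than a soft duality argument.
```
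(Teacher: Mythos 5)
The paper does not prove \thmref{thm:lev24} at all: it is imported verbatim from \cite[Theorem 1.1]{Lev25} and used as a black box, so there is no internal proof to compare against. Judged on its own terms, your proposal is a research plan rather than a proof, and the plan has concrete gaps. The central objects are never constructed: you say you would build $g$ via a Fourier transform $\ft{g}$ with a ``tailored'' zero set and choose $\lam_n = n + o(1)$ so that the exponentials are complete in a suitable weighted space, but that is precisely the content of the theorem; nothing in the proposal pins down $\ft{g}$ or the perturbations $\alpha_n=\lam_n-n$, and the key step --- showing that $\int g(x-\lam_n)\overline{h(x)}\,dx=0$ for all $n>N$ forces $h=0$ in $L^q(\R)$ --- is only asserted, with an appeal to an unspecified ``uniqueness theorem''.

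Two further points would make the plan fail even if fleshed out. First, you are sketching the Olevskii--Ulanovskii route of \cite{OU04}, \cite{OU18a} (exponentially small perturbations, entire-function growth control), whereas \cite{Lev25} explicitly takes a different approach --- the paper says so in Subsection~\ref{subsec:complete} --- precisely because the extra features claimed here (only positive translates, completeness surviving removal of \emph{any} finite subfamily) do not come out of the perturbation machinery. Second, your heuristic for the removal of $N$ terms is backwards: deleting elements from a ``complete-and-minimal-type system'' destroys completeness by the very definition of minimality, so what must be proved is infinite excess, and that is the hard part rather than a consequence of ``overcompleteness built in''. Likewise, \lemref{lem:uniqaqprod} concerns annihilation by a single nonzero trigonometric polynomial and is a tool of the present paper's Section~\ref{secC1}; it does not convert the infinite family of sampling conditions at the perturbed points $\{\lam_n\}$ into anything it applies to. The honest options are to cite \cite{Lev25} for this statement, as the paper does, or to reproduce the actual construction from that reference.
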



\section{Localization lemma}
\label{secP2}

\subsection{}
A key ingredient in our proof of \thmref{thm:main} is the following lemma.

\begin{lem}
\label{lemP3.3}
Let $p > (1+\sqrt{5})/2$. 
Given any $\eps > 0$ there exist two real trigonometric polynomials
$P$  and  $\gam$ with integer spectrum, such that 
\begin{enumerate-num}
\item \label{tlo:i} $\ft{P}(0) = 0$,  $\| \ft{P} \|_{\infty} < \eps$;
\item \label{tlo:ii} $\| \gam  - 1 \|_{A^p(\T)} < \eps$;
\item \label{tlo:iii} $\|\gam \cdot P - 1 \|_{A^p(\T)} < \eps$;
\item \label{tlo:iv} $\max\limits_l \|\gam \cdot S_l(P)  \|_{A^p(\T)} < C_p$,
\end{enumerate-num}
where $C_p$ is a constant depending only on $p$.
\end{lem}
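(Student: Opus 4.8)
The plan is to pass to the Fourier side and build the coefficient sequences $\ft\gam$ and $\ft P$ directly. Writing $\ft\gam=\del_0+\ft a$, conditions \ref{tlo:i}--\ref{tlo:iv} become statements about finitely supported sequences: $\ft P(0)=0$ and $\|\ft P\|_\infty<\eps$; $\|\ft a\|_{\ell^p}<\eps$; the convolution satisfies $\ft\gam*\ft P\approx\del_0$ in $\ell^p$; and $\sup_l\|\ft\gam*\ft{S_l(P)}\|_{\ell^p}\le C_p$. The guiding idea is to take $P\approx 1$ minus a sum of narrow spikes, and $\gam\approx$ the truncated reciprocal $1/P\approx 1$ plus the same spikes, so that $\gam P\approx 1$ while the constraint $\ft P(0)=0$ is arranged by giving the spikes total mass $1$ (so that $P$ has mean zero). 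Since a single deep spike has Fourier coefficients of size $\sim 1$, the spikes must be many and shallow, with signs chosen so that cancellation keeps $\|\ft P\|_\infty$ small.

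For the spikes I would use translated trapezoids $\tau_h(\cdot-t_j)$ (modulations on the Fourier side), exploiting the estimate \eqref{eq:tauhap}, namely $\|\tau_h\|_{A^p(\T)}\le 3h^{(p-1)/p}$. Placing $\sim N$ spikes of width $h\sim 1/N$ at a grid $t_j=j/N$, with height $c$ and signs $\eps_j$, gives $P\approx 1-c\sum_j\eps_j\,\tau_h(\cdot-t_j)$, so that $\|\ft P\|_\infty\sim c$, while normalizing the total spike mass to $1$ forces $\ft P(0)=0$ exactly. The corresponding $\gam$ comes from truncating the Neumann series for $1/P$, which to first order is $\gam\approx 1+c\sum_j\eps_j\,\tau_h(\cdot-t_j)$, whence $\|\gam-1\|_{A^p(\T)}$ is governed by $c\,N^{1/p}h^{(p-1)/p}$. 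To push the admissible range of $p$ below $2$, I would organize the spikes on two separated scales and invoke the spectral-separation principle behind \lemref{sep-specR} in its periodic form (via the operation $f\mapsto f_\nu$): placing the inner scale at a much higher modulation $\nu$ makes the two-scale object factor into a product whose $A^p(\T)$ norms and convolutions multiply, so that a modest single-scale approximate inverse is boosted into a good one.

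It then remains to tune the parameters. Condition~\ref{tlo:i} asks $c<\eps$; condition~\ref{tlo:ii} asks that $c\,N^{1/p}h^{(p-1)/p}<\eps$; condition~\ref{tlo:iii} forces $\ft\gam*\ft P$ to have a unit spike at $0$, which by H\"older requires $P$ to be large in $\ell^{p'}$, i.e.\ to contain many spikes; and condition~\ref{tlo:iv} controls the partial products as the spectrum of $P$ is completed frequency by frequency. Writing out the constraints from \ref{tlo:ii}, \ref{tlo:iii}, \ref{tlo:iv} in terms of the two scale parameters and eliminating them, one finds that the approximate-inverse requirement \ref{tlo:iii} drives the number of spikes up, while the uniform partial-sum bound \ref{tlo:iv} deteriorates as that number grows; the two effects are simultaneously controllable precisely when $\tfrac1p+\tfrac1{p^2}<1$, equivalently $p^2-p-1>0$, i.e.\ $p>(1+\sqrt5)/2$.

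The main obstacle is exactly this tension between \ref{tlo:iii} and \ref{tlo:iv}: the uniform partial-sum estimate is the enemy of the approximate-inverse identity, and the two-scale arrangement is what reconciles them; it also explains the threshold, since a deeper iteration would improve \ref{tlo:iii} but break \ref{tlo:iv}, so the method stops at the golden ratio and leaves the case $p>1$ open. The remaining points are routine: integer spectrum is ensured by using integer modulations and translates, realness by symmetrizing (taking cosines, i.e.\ $\pm$-pairs of frequencies), the exact identity $\ft P(0)=0$ by the mass normalization of the spikes, and the passage from the trapezoid model to genuine trigonometric polynomials by truncating the Fourier expansions, the tails being negligible in $\ell^p$ by \eqref{eq:tauhap}.
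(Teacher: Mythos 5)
There is a genuine gap at the heart of your construction: the two requirements $\ft{P}(0)=0$ and $\|\ft{P}\|_{\infty}<\eps$ cannot both be met by \emph{shallow} spikes, no matter how the signs $\eps_j$ are chosen. Indeed, $\ft{P}(0)=0$ forces the spike part $c\sum_j \eps_j\,\tau_h(\cdot-t_j)$ to carry total mass exactly $1$; each trapezoid contributes mass $3ch$, so $3ch\sum_j\eps_j=1$, and with $N$ spikes of width $h\sim 1/N$ this forces $c\ge 1/3$ (introducing negative signs only increases the required $c$). So the spatial height $c$ cannot be small, and your claim $\|\ft{P}\|_\infty\sim c<\eps$ collapses. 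In the degenerate case where the grid periodization of $\tau_h$ is exactly constant, $P$ becomes identically $0$ and \ref{tlo:iii} fails instead. This is not a technicality: since $\gam\approx 1$ and $\gam\cdot P\approx 1$ while $\int_{\T}P=0$, the function $P$ must be \emph{deep} (of height $\sim h^{-1}$) on the small set where $\gam$ vanishes, so the smallness of $\|\ft{P}\|_{\infty}$ has to come from a mechanism other than shallowness or sign cancellation. Your additive ``truncated Neumann series'' $\gam\approx 1+c\sum_j\eps_j\tau_h(\cdot-t_j)$ inherits the same problem: once the spikes are deep, $\gam-1$ is no longer small in $A^p(\T)$ in this additive form.

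The paper's construction resolves exactly this tension, and differently from what you propose. One takes a single deep unit-mass spike $g=1-h^{-1}\Delta_h$ (so $\ft{g}(0)=0$ and $\|\ft{g}\|_\infty\le 1$), places $N$ dilated copies $g(\nu^j t)$, $j=0,\dots,N-1$, at lacunary scales so that their spectra are pairwise disjoint, and sets $P=\frac1N\sum_j g(\nu^j t)$: spectral disjointness plus the $1/N$ prefactor gives $\|\ft{P}\|_\infty\le 1/N$ while keeping $\ft{P}(0)=0$. Correspondingly $\gam$ is not an additive approximate inverse but the Riesz \emph{product} $\prod_j f(\nu^j t)$ with $f\approx 1-\tau_h$, which vanishes on all $N$ spike supports simultaneously and whose $A^p(\T)$ norm factorizes, $\|\gam\|_{A^p(\T)}^p=\|f\|_{A^p(\T)}^{pN}\le(1+3^ph^{p-1})^N$. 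You do correctly locate the threshold in spirit: keeping this product near $1$ forces $N\lesssim h^{1-p}$, while the partial-sum bound \ref{tlo:iv} --- specifically the block where $S_l$ cuts one deep spike $g(\nu^s t)$ in the middle, which must be bounded crudely by $\frac1N\|f\cdot S_m(g)\|_{A^p(\T)}\|f\|_{A^p(\T)}^{N-1}\lesssim N^{-1}h^{-1/p}$ --- forces $N\gtrsim h^{-1/p}$, and these are compatible precisely when $p^2-p-1>0$. But your single-grid, shallow-spike, additive-inverse construction does not produce objects satisfying \ref{tlo:i}--\ref{tlo:iii}, and the ``two separated scales'' remark is too vague to substitute for the $N$-scale product/average structure (with $N\to\infty$ as $\eps\to 0$) that actually carries the proof.
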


\begin{remarks*}
1. We note that the conditions 
\ref{tlo:i} and
\ref{tlo:ii}+\ref{tlo:iii} ``go against'' each other, as
\ref{tlo:i} says that $P$ is ``small'', while
\ref{tlo:ii}+\ref{tlo:iii} imply that $P$ should be 
``nearly'' one. For instance, it is easy to see
that the lemma fails for $p=1$,
since in this case \ref{tlo:ii}+\ref{tlo:iii} imply
that $P$ must be uniformly close to one, 
and hence  $\ft{P}(0)$ cannot be small.

2. Moreover, according to a recent preprint \cite{Boc25},
the lemma fails for $p < (1+\sqrt{5})/2$.
While this leaves open the question of whether  \thmref{thm:main} holds 
for every $p>1$,  it indicates that
proving this would require new ideas.

3. The case $p = 2$ is simpler since we have $A^2(\T) = L^2(\T)$
by Parseval's theorem; in this case the lemma follows
from \cite[Lemma 2.2]{Ole02}. However, for
$p < 2$ the Parseval theorem is no longer available, and
our proof below requires some additional 
ideas and a more careful analysis of the $A^p(\T)$ norm.

4. The proof of  \lemref{lemP3.3} given below 
establishes condition \ref{tlo:iv} with
an absolute constant $C_p$ which in fact  does not depend on $p$.
\end{remarks*}

\subsection{}
The following assertion will be used in our proof of \lemref{lemP3.3}.

\begin{lemma}
\label{polydil}
Let	$P_0, \dots, P_{N-1}$ be trigonometric polynomials 
with integer spectrum, and let $\nu$ be a positive
integer, $\nu > 2 \deg (P_j)$, $0 \le j \le N-1$.
Define
$P(t) := \prod_{j=0}^{N-1} P_j(\nu^j t)$.
Then we have
$\ft{P}(0) = \prod_{j=0}^{N-1} \ft{P}_j(0)$
and
$\|P\|_{A^p(\T)} =  \prod_{j=0}^{N-1} \|P_j\|_{A^p(\T)}$.
\end{lemma}

Indeed, expanding each $P_j$ as a Fourier sum yields
\begin{equation}
\label{eqPexpand}
P(t) =  \sum \Big[ \prod_{j=0}^{N-1} \ft{P}_j(k_j) \Big]
e^{2 \pi i t \sum_{j=0}^{N-1} k_j \nu^j}
\end{equation}	
where the sum goes through all integer vectors
$(k_0, k_1, \dots, k_{N-1})$
with $|k_j| \le \deg(P_j)$.
The condition $\nu > 2 \deg (P_j)$, $0 \le j \le N-1$,
ensures that the exponentials in 
\eqref{eqPexpand} have distinct frequencies, so that
\eqref{eqPexpand} is the Fourier expansion of $P$.
The conclusion of the lemma now follows 
in a straightforward manner.

\subsection{Proof of \lemref{lemP3.3}}

The idea of the proof is inspired by the
``separation of spectra'' technique, see \cite[Section 2.3]{KO01}.
We decompose the polynomial $P$ as a sum of small
elementary pieces, whose Fourier spectra are
localized on disjoint intervals.
In turn, the polynomial $\gam$ is obtained as a
Riesz-type product, which ensures that the set
where $P$ attains large values is essentially
localized away from the support of $\gam$.

We now turn to the details of the proof.
It is divided into several steps.

\subsubsection{}
Due to monotonicity of the $A^p$  norms, 
we may assume  that $ p < 2$.
Let us choose $\del = \del(\eps, p) > 0$ small enough, to be specified later.
We then choose and fix $0 < h < 1/3$ and a positive integer $N$ 
(both depending on $\eps$, $\del$ and $p$) such that 
\begin{equation}
\label{eq:mainconlem}
N > \eps^{-1}, \quad (1 + 3^p h^{p-1})^N < 1 + \del, \quad
(1-3h)^N > 1 - \del, \quad 4^p N^{-p} h^{-1}  < 1 - \del.
\end{equation}	

We show that such a choice of $h$ and $N$ exists. Indeed, 
denote $\eta := 3^{-p} \log(1+\del)$, and let $h = h(N,\eta,p)$
be defined by the condition $N = \eta \cdot h^{1-p}$.
Then for $N$ sufficiently large we have the inequalities 
$N>\eps^{-1}$;
$(1 + 3^p h^{p-1})^N <   \exp(3^p \eta) = 1+\del$;
$(1-3h)^N > 1 - 3hN  > 1 - \del$, and lastly, using
the assumption 
 $p  > (1 + \sqrt{5})/2$, we can also ensure that
$N^{-p} h^{-1} = \eta^{-p} h^{p(p-1)-1} < 4^{-p}(1-\del)$.

\subsubsection{}
Now  observe that 
$(1 - \tau_h) \cdot (1  - h^{-1} \Del_h) = 1 - \tau_h$. 
Let  $f$ and $g$ be  Fourier partial sums 
of $ 1 -  \tau_h$ and $ 1 - h^{-1} \Delta_h$ respectively,  
of sufficiently high order such that
\begin{equation}
\label{eq:fgproda}
\|f \cdot g - f \|_{A} < \del.
\end{equation}		
Next,  choose a  positive integer $\nu$ satisfying 
\begin{equation}
\label{eq:nularge}
\nu > 2( \deg(f)+ \deg(g)),
\end{equation}		
and define
\begin{equation}
\gam(t) := \prod_{j=0}^{N-1} f(\nu^j t), \quad
P(t) := \frac1{N} \sum_{j=0}^{N-1} g(\nu^j t).
\end{equation}		
We will check that the
conditions	\ref{tlo:i}--\ref{tlo:iv} are satisfied.

\subsubsection{}
First we note that $\ft{P}(0) = 0$. Also,
 due to \eqref{eq:nularge},
\begin{equation}
\| \ft{P} \|_{\infty} = \frac1{N} \| \ft{g} \|_{\infty} \le
\frac1{N} \sup_{n \neq 0} h^{-1} \ft{\Delta}_h(n) \le 
\frac1{N} \int_{\T} h^{-1} \Delta_h(t) dt = \frac1{N} < \eps,
\end{equation}
where the last inequality is due to \eqref{eq:mainconlem}.
Thus we obtain condition \ref{tlo:i}.

\subsubsection{}
Next, due to \eqref{eq:tauhap} we have
\begin{equation}
\label{eq:lemfapest}
 \| f \|_{A^p}^p \le \|1 - \tau_h\|_{A^p}^p = 
(1 - 3h)^p + \sum_{n \ne 0} |\ft{\tau}_h(n)|^p 
< 1 + 3^p h^{p-1},
\end{equation}
hence using \eqref{eq:mainconlem}, \eqref{eq:nularge}   we obtain
\begin{equation}
\label{eq:gamapp}
 \| \gam\|_{A^p}^p = \big( \| f \|_{A^p}^{p} \big)^N  < (1 + 3^p h^{p-1})^N < 1 + \del.
\end{equation}		
 Also, again using \eqref{eq:mainconlem}, \eqref{eq:nularge} we have
\begin{equation}
\label{eq:gamzeroft}
\ft{\gam}(0) = \ft{f}(0)^N =  (1-3h)^N > 1 - \del,
\end{equation}
and so it follows from \eqref{eq:gamapp}, \eqref{eq:gamzeroft} that
\begin{equation}
\label{eq:aponegam}
 \| \gam - 1\|_{A^p}^p =  \| \gam\|_{A^p}^p - \ft{\gam}(0)^p + (1 - \ft{\gam}(0))^p
< (1 + \del) - (1 - \del)^p + \del^p < (\eps/2)^p,
\end{equation}
provided that $\del = \del(\eps, p)$ is sufficiently small.
So condition \ref{tlo:ii} follows.

\subsubsection{}
Next, we have
\begin{equation}
\label{eq:gamptdiff}
  \gam(t)  (P(t) - 1)= \frac1{N} \sum_{j=0}^{N-1} 
(f(\nu^j t) g(\nu^j t) - f(\nu^j t)) \prod_{k \neq j} f(\nu^k t),
\end{equation}
and thus, recalling \eqref{eq:fgproda}, \eqref{eq:nularge},
\eqref{eq:lemfapest}, \eqref{eq:gamapp}, this implies  
\begin{equation}
\label{eq:gampsimgam}
\|\gam \cdot (P - 1)  \|_{A^p}
\le \|f \cdot g - f \|_{A} \cdot \big( \| f \|_{A^p} \big)^{N-1} 
 < \del (1 + \del)^{1/p} < \eps/2,
\end{equation}
for $\del = \del(\eps, p)$ small enough.
We conclude from  \eqref{eq:aponegam}, \eqref{eq:gampsimgam} that
\begin{equation}
\| \gam \cdot P  - 1\|_{A^p}  \le 
\| \gam   \cdot (P  - 1) \|_{A^p} +
\| \gam - 1 \|_{A^p} < \eps/2 + \eps/2 = \eps,
\end{equation}
and thus condition \ref{tlo:iii} holds.

\subsubsection{}
Let us finally check that also
condition \ref{tlo:iv} is satisfied.
Any partial sum $S_l(P)$ can be decomposed as 
$S_l(P)(t) = A(t)  + B(t)$, where 
\begin{equation}
A(t) := \frac1{N} \sum_{j=0}^{s-1} g(\nu^j t), \quad
B(t) := \frac1{N} S_m(g)(\nu^s t).
\end{equation}
Using the same argument as in 
\eqref{eq:gamptdiff}, \eqref{eq:gampsimgam} we can obtain
\begin{equation}
\label{eq:asnprodgam}
\| \gam \cdot (A  - s/N ) \|_{A^p}
\le \frac{s}{N} \cdot \|  f \cdot g - f \|_{A}   \cdot \big( \| f \|_{A^p} \big)^{N-1} 
< \del (1 + \del)^{1/p} < \eps/2,
\end{equation}
and consequently 
\begin{equation}
\label{eq:asimgam}
 \| \gam \cdot A \|_{A^p} \le 
\| \gam \cdot (A  - s/N ) \|_{A^p}
+ \frac{s}{N}  \cdot \|  \gam \|_{A^p} 
< \eps/2  +  (1 + \eps)  < 2.
\end{equation}
Next, we have
\begin{equation}
\label{eq:gbexpand}
 \gam(t) B(t) = 
 \frac1{N}   (f \cdot  S_m(g))(\nu^s t) \prod_{k \neq s} f(\nu^k t),
\end{equation}
and therefore due to \eqref{eq:nularge} 
\begin{equation}
\label{eq:bprgam}
 \| \gam \cdot B \|_{A^p}
=  \frac1{N}  \| f \cdot S_m(g) \|_{A^p} \cdot \big( \| f \|_{A^p} \big)^{N-1}.
\end{equation}
We note using \eqref{eq:tauhap} that
\begin{equation}
\label{eq:faleqfour}
 \| f \|_{A} \le \| 1 -  \tau_h \|_A  \le 1 + \| \tau_h \|_A \le  4,
\end{equation}
and due to \eqref{estim_delta},
\begin{equation}
\label{eq:smgapleq}
 \| S_m(g) \|_{A^p}^p \le  \| g \|_{A^p}^p 
 \le \| 1 - h^{-1} \Delta_h \|_{A^p}^p
 = h^{-p} \sum_{n \neq 0}  \ft{\Delta}_h(n)^p < h^{-p} h^{p-1} = h^{-1}.
\end{equation}
Therefore, using
\eqref{eq:bprgam},
\eqref{eq:faleqfour},
\eqref{eq:smgapleq}
and \eqref{eq:mainconlem},
\begin{equation}
\label{eq:bprgmest}
 \| \gam \cdot B \|_{A^p}^p \le 4^p N^{-p}  h^{-1} (1 + \del)
 < (1 - \del)(1 + \del)  < 1.
\end{equation}
We conclude that
\begin{equation}
\| \gam \cdot S_l(P) \|_{A^p} \le
\| \gam \cdot A \|_{A^p} +
 \| \gam \cdot B \|_{A^p} < 2 + 1 = 3.
\end{equation}
Thus condition \ref{tlo:iv} is established
and the lemma is proved.
\qed


\section{Schauder frames of weighted exponentials}
\label{secC1}

In this section we prove \thmref{thm:main}.
First we note that the
Fourier transform is an isometric
isomorphism $A^p(\R) \to L^p(\R)$,
which allows us to reformulate
\thmref{thm:main}
 as a result about Schauder frames of weighted
exponentials in $A^p(\R)$.

\begin{thm}
\label{main_2}
Let $p > (1+\sqrt{5})/2$. There exist 
 $w \in A^p(\R)$, $\{h^*_j\} \sbt (A^p(\R))^*$, and a 
 uniformly discrete real sequence $\{\lambda_j\}_{j=1}^\infty$
 satisfying $|\lam_j| = n_j + o(1)$, where $n_j$ are
 positive integers,  $0 < n_1 < n_2 < \dots$,
such that every $f\in A^p(\R)$ admits a series expansion
\begin{equation}
\label{eq:ser.w.1}
f(t) =\sum_{j=1}^\infty h^*_j(f)  w(t) e^{2\pi i \lambda_j t} 
\end{equation}
convergent in the $A^p(\R)$  norm.
\end{thm}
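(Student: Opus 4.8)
The plan is to build the system in \eqref{eq:ser.w.1} by an iterative ``reproduce the residual'' scheme, so that the main work reduces to a single \emph{one-step lemma}. Concretely, I would try to establish the following: there is a fixed function $w \in A^p(\R)$, a contraction factor $\theta < 1$ and a constant $K$, such that for every $f \in A^p(\R)$ and every index $N$ one can choose finitely many frequencies $\lam_j$, $j \in F$, with integer parts lying beyond $N$, together with \emph{continuous} linear functionals, producing a finite sum $Rf = \sum_{j \in F} c_j(f)\, w\, e^{2\pi i \lam_j t}$ such that $\|f - Rf\|_{A^p(\R)} \le \theta \|f\|_{A^p(\R)}$ while all partial sums of $Rf$, ordered by frequency, are bounded by $K\|f\|_{A^p(\R)}$. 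Granting this, I would set $r_0 = f$, apply the one-step operator to the residual $r_{k-1}$ to obtain a fresh finite block $R_k r_{k-1}$ with frequencies beyond those already used, and put $r_k = r_{k-1} - R_k r_{k-1}$. Then $\|r_k\| \le \theta^k \|f\| \to 0$, the blocks concatenate into a single series \eqref{eq:ser.w.1}, and each coefficient is a fixed continuous linear functional of $f$ because $r_{k-1}$ depends linearly and continuously on $f$ through the previously chosen finite-rank operators; the partial-sum bound within each block upgrades block convergence to genuine norm convergence of the full series.

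The one-step operator is where \lemref{lemP3.3} enters. Fix $\eps$ small, let $P,\gam$ be the trigonometric polynomials it provides, and let $u \in \S(\R)$ be the profile furnished by \thmref{thm:lev24} (after passing to the Fourier side, where the modulation $w \mapsto w\,e^{2\pi i\lam t}$ corresponds to the translation $\ft{w} \mapsto \ft{w}(\cdot - \lam)$, so completeness of translates in $L^p(\R)$ becomes completeness of the weighted exponentials $\{u\,e^{2\pi i\mu_n t}\}$ in $A^p(\R)$, with $\mu_n = n + o(1)$). Set $w := u \cdot \gam$. Condition \ref{tlo:iii} together with \lemref{ufprodst} gives the reproduction identity
\[
\| u - w \cdot P \|_{A^p(\R)} = \| u\,(1 - \gam P) \|_{A^p(\R)} \le \nmb{u}\,\|1 - \gam P\|_{A^p(\T)} < \nmb{u}\,\eps ,
\]
and since $\gam,P$ have integer spectrum, $w \cdot P = \sum_k \ft{P}(k)\, w\, e^{2\pi i k t}$ is a finite combination of integer modulations of $w$ with $\ft{P}(0)=0$ and small coefficients by \ref{tlo:i}. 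Crucially, the partial sums of this reproduction are $w \cdot S_l(P) = u\,\gam\,S_l(P)$, so \lemref{ufprodst} and condition \ref{tlo:iv} bound them uniformly, $\|w\cdot S_l(P)\|_{A^p(\R)} \le \nmb{u}\,C_p$. Replacing each $u\,e^{2\pi i\mu_n t}$ in an approximation of the residual by its reproduction $\sum_k \ft{P}(k)\,w\,e^{2\pi i(\mu_n + k)t}$ then rewrites the residual through modulations of the single fixed function $w$, at frequencies $\mu_n + k$ lying near the integers $n+k$; these can be reindexed as $\lam_j$ with $|\lam_j| = n_j + o(1)$ and spaced out (using that completeness persists after deleting finitely many terms, and that $\spec(P)$ is a fixed finite set) so as to remain uniformly discrete.

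The main obstacle, and the reason for the threshold $p > (1+\sqrt5)/2$, is quantitative control of norms when $p < 2$, where Parseval is unavailable. Completeness alone (\thmref{thm:lev24}) yields an approximation $r \approx \sum_n a_n\, u\, e^{2\pi i\mu_n t}$ of the residual with small error, but says nothing about the size of the coefficients $a_n$ or of the partial sums; yet the reproduction step costs about $\sum_n |a_n|\,\nmb{u}\,\eps$ in error and must be held below the contraction threshold, and the final series must have bounded partial sums after reordering by frequency. The entire force of \lemref{lemP3.3}, and in particular the uniform bound \ref{tlo:iv} on $\|\gam\,S_l(P)\|_{A^p(\T)}$ by a constant independent of the partial sum, is designed precisely to supply this control; the tension between \ref{tlo:i}, which keeps the self-interaction small so that one step is genuinely contractive, and \ref{tlo:ii}--\ref{tlo:iii}, which force $P$ to be ``nearly one'' so that $w$ still reproduces $u$, is exactly what collapses for $p < (1+\sqrt5)/2$. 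I therefore expect the delicate part of the argument to be the bookkeeping that simultaneously (a) controls the coefficient and partial-sum norms through the localization lemma so that the blocks telescope to a norm-convergent series, and (b) keeps the frequencies marching to infinity and uniformly discrete --- forbidding the redundant ``back and forth'' reuse of frequencies of \cite{FPT21} --- while retaining $|\lam_j| = n_j + o(1)$.
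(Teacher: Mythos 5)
You have correctly identified the role of the localization lemma: the reproduction identity $\|u - u\gam P\|_{A^p(\R)} \le \nmb{u}\|1-\gam P\|_{A^p(\T)}$, the smallness of the individual coefficients from \ref{tlo:i}, and the partial-sum control from \ref{tlo:iv} are all used in the paper exactly as you describe. But the global architecture you propose --- a fixed one-step contraction $R$ applied iteratively to residuals --- has two genuine gaps. First, such an $R$ cannot exist as a \emph{fixed} operator: if the frequency set $F$ and the functionals $c_j$ do not depend on $f$, then $R$ is a fixed finite-rank operator with $\|I-R\|\le\theta<1$, which forces $R$ to be invertible on the infinite-dimensional space $A^p(\R)$, a contradiction. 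If instead $F$ and $c_j$ are chosen per $f$ (as your phrasing suggests), the concatenated expansion uses $f$-dependent frequencies and coefficients, and you do not obtain a single universal sequence $\{\lam_j\}$ with universal functionals $\{h_j^*\}$ as the theorem requires. Second, even for a single $f$, the replacement of $u$ by $w\cdot P$ inside an approximation $\sum_n a_n u e^{2\pi i \mu_n t}$ costs an error of order $\nmb{u}\,\eps\,\sum_n|a_n|$, and completeness gives no bound on $\sum_n|a_n|$ in terms of the norm of the residual; with a fixed $\eps$ (hence a fixed $w=u\cdot\gam$) you cannot guarantee the contraction $\|f-Rf\|\le\theta\|f\|$. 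You notice this difficulty but attribute its resolution to \lemref{lemP3.3}, which does not control these coefficients.

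The paper resolves both issues with a different architecture. Instead of iterating on residuals, it fixes once and for all a normalized Schauder basis $\{\varphi_k\}$ of $A^p(\R)$, approximates each $\varphi_k$ by $u_{k-1}\cdot Q_k$ using completeness, and only \emph{then} chooses $\eps_k$ small relative to $\|\ft{Q}_k\|_1$ (and to the earlier $\|\ft{P}_j\|_1\|\ft{Q}_j\|_1$), applying \lemref{lemP3.3} with this adaptive $\eps_k$. This forces $w$ to be the limit of the infinite product $u_k = u_0\prod_{j\le k}\tilde\gam_j$ rather than a single $u\cdot\gam$; completeness must then be re-verified after each multiplication by $\tilde\gam_k$ (via \lemref{lem:uniqaqprod}), and the dilations by rapidly growing $\nu_k$ are what separate the spectra of the blocks and let \lemref{sep-specR} transfer the $A^p(\T)$ bound \ref{tlo:iv} to $A^p(\R)$. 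The estimate $\|\varphi_k - w\tilde P_k Q_k\|<3\eta_k$ then shows $\{w\tilde P_k Q_k\}$ is a small perturbation of $\{\varphi_k\}$, hence itself a Schauder basis, and its biorthogonal functionals $\psi_k$ produce the universal coefficient functionals $h^*_\lam = d_{n,k}\ft P_k(m)\psi_k$. Your proposal would need to be restructured along these lines to close the gaps.
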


Note that $\{n_j\}$ is allowed to be a \emph{subsequence} of the 
positive integers, since we may add more elements with zeros 
as coefficient functionals, and the
 series expansion \eqref{eq:ser.w.1} will remain valid.
 Hence \thmref{thm:main} follows from   \thmref{main_2}.

The restriction $p > (1+\sqrt{5})/2$ is  needed
so that we can invoke 
\lemref{lemP3.3}, while otherwise 
we only use the assumption $p>1$ in the proof.

\subsection{}
We begin the proof by an application of
\thmref{thm:lev24}, which implies the existence of
a function  $u_0 \in \S(\R)$ and
a real sequence  $\{\sig(n)\}_{n=1}^{\infty}$
satisfying 
\begin{equation}
\label{eq:disten}
\sig(n) = n + o(1), \quad n \to + \infty,
\end{equation}
such that for every $N$ the system
$\{u_0(t) e^{2 \pi i \sig(n) t}\}$, $n > N$, 
is complete in $A^p(\R)$.

Next, we choose a normalized Schauder basis 
$\{\varphi_k\}_{k=1}^{\infty}$ 
for the space $A^p(\R)$. (For example, one may take
$\{\ft{\varphi}_k\}_{k=1}^{\infty}$ to be
the normalized basis of Haar functions in $L^p(\R)$).

We will  now construct by induction a sequence of 
Schwartz functions $\{u_k\}$ on $\R$. 
We will perform the construction 
in such a way that for each $k$ and every $N$, the system
\begin{equation}
\label{eq:ukexp}
\{u_k(t) e^{2 \pi i \sig(n) t}\}, \; n > N,
\end{equation}
is complete in the space $A^p(\R)$.
The construction is done as follows.

At the $k$'th step of the induction, given any $ \eta_k >0$ and
any   positive integer $N_k$,
we use the completeness of the system
$\{u_{k-1}(t) e^{2 \pi i \sig(n) t}\}$, $n > N_k$,
to find a trigonometric polynomial
\begin{equation}
\label{eq:defqk}
Q_k(t) = \sum_{N_k < n < N'_k} d_{n,k} e^{2 \pi i \sig(n) t}
\end{equation}
such that
\begin{equation}
\label{eq:phikapprox}
\| \varphi_k  - u_{k-1} \cdot  Q_k \|_{A^p(\R)} < \eta_k.
\end{equation}

We choose a small number $\eps_k > 0$ so that
\begin{equation}
\label{eq:epsksmall}
\eps_k \cdot  \Big(  1  + \nmb{u_{k-1}} \Big) 
\Big( 1 +  \|  \ft{Q}_k \|_{1}
+ \sum_{j=1}^{k-1}  \|  \ft{P}_j  \|_{1} \cdot  \|  \ft{Q}_j \|_{1} \Big)  < 2^{-k} \eta_k,
\end{equation}
and apply \lemref{lemP3.3} in order  to find real trigonometric polynomials
$P_k$ and $\gam_k$ with integer spectrum, such that
\begin{enumerate-num}
\item \label{ktlo:i} $\ft{P}_k(0) = 0$,  $\| \ft{P}_k \|_{\infty} < \eps_k$;
\item \label{ktlo:ii} $\| \gam_k  - 1 \|_{A^p(\T)} < \eps_k$;
\item \label{ktlo:iii} $\|\gam_k \cdot P_k - 1 \|_{A^p(\T)} < \eps_k$;
\item \label{ktlo:iv} $\max\limits_l \|\gam_k \cdot S_l(P_k)  \|_{A^p(\T)} < C_p$.
\end{enumerate-num}

We now choose a large positive integer $\nu_k$ (to be specified later)  and set
\begin{equation}
\tilde{P}_k(t) := P_k(\nu_k t), \quad
\tilde{\gam}_k(t) := \gam_k(\nu_k t).
\end{equation}

Define  $u_k := u_{k-1} \cdot \tilde{\gam}_k$
which is a Schwartz function on $\R$.
We claim that for every $N$ the system \eqref{eq:ukexp}
is complete in the space $A^p(\R)$. Indeed,
let $\alpha$ be a tempered distribution belonging to the
dual space
$(A^p(\R))^* = A^{p'}(\R)$, $p' = p/(p-1)$,
and suppose that $\alpha$ annihilates the system \eqref{eq:ukexp}.
 This means that 
 $\alpha \cdot \tilde{\gam}_k$,
 which also lies in $A^{p'}(\R)$, 
 annihilates the system 
 	$\{u_{k-1}(t) e^{2 \pi i \sig(n) t}\}$, $n >N$.
  By the completeness of the latter system in $A^p(\R)$,
it follows that $\alpha \cdot \tilde{\gam}_k = 0$. 
In turn, using \lemref{lem:uniqaqprod} we conclude that
$\alpha = 0$. Hence the system \eqref{eq:ukexp}
is complete in $A^p(\R)$.

\subsection{}
It follows from \ref{ktlo:ii} and \eqref{eq:epsksmall} that
\begin{equation}
\| u_k - u_{k-1} \|_{A^p(\R)} = \| u_{k-1} (\tilde{\gam_k} - 1) \|_{A^p(\R)}
\le \nmb{u_{k-1}} \| \gam_k - 1  \|_{A^p(\T)} < 2^{-k},
\end{equation}
hence the sequence $u_k$ converges 
in the space $A^p(\R)$ to some element $w \in A^p(\R)$.

\subsection{}
Next we claim that the estimate
\begin{equation}
\label{eq:wpertphk}
\| \varphi_k - w \cdot  \tilde{P}_k \cdot Q_k \|_{A^p(\R)} < 3 \eta_k
\end{equation}
holds for all $k$.   Indeed,
\begin{align}
& \| \varphi_k - w \cdot  \tilde{P}_k \cdot Q_k \|_{A^p(\R)}  \label{eq:estphkp1}
\le \| \varphi_k - u_{k-1} \cdot  Q_k \|_{A^p(\R)} \\[4pt]
& \qquad + \| u_{k-1} \cdot Q_k \cdot (1 -  \tilde{\gam}_k  \cdot \tilde{P}_k ) \|_{A^p(\R)} 
+ \| (u_k - w)  \cdot \tilde{P}_k \cdot Q_k   \|_{A^p(\R)}. \label{eq:estphkp2}
\end{align}
The right hand side of \eqref{eq:estphkp1} is less than $\eta_k$ due to
\eqref{eq:phikapprox}. To estimate the first term in \eqref{eq:estphkp2} 
we   apply inequalities \eqref{est_prod_2} and \eqref{est_prod_3}, and 
use \ref{ktlo:iii} and \eqref{eq:epsksmall}, to obtain
\begin{equation}
\| u_{k-1} \cdot Q_k \cdot (1 -  \tilde{\gam}_k  \cdot \tilde{P}_k ) \|_{A^p(\R)} 
\le \nmb{u_{k-1}} \cdot \|\ft{Q}_k\|_{1}   \cdot \| 1 - \gam_k  \cdot P_k  \|_{A^p(\T)} < \eta_k.
\end{equation}
It remains to estimate the second term in \eqref{eq:estphkp2}. 
We observe that for any fixed $k$ we have
$u_j  \cdot \tilde{P}_k \cdot Q_k \to
w  \cdot \tilde{P}_k \cdot Q_k$ as $j \to + \infty$ in the $A^p(\R)$ norm, hence
\begin{align}
& \label{telesc1}\| (u_k - w)  \cdot \tilde{P}_k \cdot Q_k   \|_{A^p(\R)} 
\le \sum_{j=k+1}^{\infty}
\| (u_{j-1} - u_j)  \cdot \tilde{P}_k \cdot Q_k   \|_{A^p(\R)} \\
& \label{telesc2}\qquad    = \sum_{j=k+1}^{\infty}
\|  u_{j-1}  \cdot (1 - \tilde{\gam}_j)  \cdot \tilde{P}_k \cdot Q_k   \|_{A^p(\R)} \\
& \label{telesc3}\qquad    \le \sum_{j=k+1}^{\infty}
\nmb{u_{j-1}}  \cdot \| \ft{Q}_k \|_1 \cdot \|  \ft{P}_k  \|_{1} 
\cdot \|  1 -  \gam_j   \|_{A^p(\T)}  \\
&\label{telesc4} \qquad    \le \sum_{j=k+1}^{\infty} 2^{-j} \eta_j < \eta_k,
\end{align}
again using \ref{ktlo:ii} and \eqref{eq:epsksmall}, and
assuming (as we may do) that the sequence $\{\eta_k\}$ is decreasing.
The estimate \eqref{eq:wpertphk} thus follows.

\subsection{}
Let  $\{\varphi^*_k\}_{k=1}^{\infty}$ 
be the sequence of continuous linear functionals on $A^p(\R)$
which is biorthogonal to the Schauder basis  $\{\varphi_k\}_{k=1}^{\infty}$.
It follows from \eqref{eq:wpertphk} that if
we  choose the sequence $\{\eta_k\}$ to satisfy
3 $\sum_{k=1}^{\infty} \|  \varphi^*_k \| \cdot \eta_k < 1$, then the system
\begin{equation}
\label{eq:S7}
\{ w \cdot  \tilde{P}_k \cdot Q_k  \}_{k=1}^{\infty} 
\end{equation}
forms another Schauder basis in the space $A^p(\R)$ 
(see \cite[Section 1.9]{You01}).
Hence every $f \in A^p(\R)$ has a series expansion 
\begin{equation}
\label{eq:S1}
f = \sum_{k=1}^{\infty} \psi_k(f) \, w \cdot  \tilde{P}_k \cdot Q_k
\end{equation}
where $\{\psi_k\}$ are the continuous linear functionals 
on $A^p(\R)$ which are biorthogonal to the system
\eqref{eq:S7}.  Moreover, recall that we have chosen the Schauder basis 
$\{\varphi_k\}_{k=1}^{\infty}$ to be normalized,
so it follows from \eqref{eq:wpertphk} that
the norms of the
elements of the system
\eqref{eq:S7} are bounded from below. 
This implies that 
\begin{equation}
\label{eq:S2}
\sup_{k} |\psi_k(f)| \le K \|f\|_{A^p(\R)}, \quad
\lim_{k \to \infty} \psi_k(f)  = 0
\end{equation}
where $K$ is a constant not depending on $f$
(see \cite[Section 1.6]{You01}).

\subsection{}
Notice that we have
\begin{equation}
\label{eq:lacex1}
\tilde{P}_k(t)  Q_k(t) =  P_k(\nu_k t) Q_k(t) = 
\sum_{m \ne 0} \ft{P}_k(m) Q_{k,m}(t),
\end{equation}
where $Q_{k,m}$ are trigonometric polynomials defined by
\begin{equation}
\label{eq:lacex2}
Q_{k,m}(t) := Q_k(t) e^{2 \pi i m \nu_k t} = 
\sum_{N_k < n < N'_k} d_{n,k} e^{2 \pi i (\sig(n) + m \nu_k) t}.
\end{equation} 
If we choose the sequence $\{\nu_k\}$ increasing sufficiently fast, then the spectra
of the polynomials $Q_{k,m}$ follow each other, meaning that
\begin{equation}
\label{eq:lacex3}
\max \spec (Q_{k,m_1}) < \min  \spec (Q_{k,m_2}), \quad m_1 < m_2.
\end{equation}
Moreover, there is a positive, increasing sequence $\{R_k\}$ such that
\begin{equation}
\label{eq:lacex9}
\spec (\tilde{P}_{k} \cdot Q_{k}) = \bigcup_{m \in \spec(P_k)} \spec(Q_{k,m})
\sbt (-R_{k+1}, -R_k) \cup (R_k, R_{k+1}).
\end{equation}

We now define 
\begin{equation}
\label{eq:lamdefunion}
\Lam   := \bigcup_{k =1}^{\infty}  \spec ( \tilde{P}_{k} \cdot Q_{k}),
\end{equation}
then each point $\lam \in \Lam$ has a unique representation as
\begin{equation}
\label{eq:lamptform}
\lam   =  m \cdot \nu_k + \sig(n), \quad k \ge 1,  \quad m \in \spec(P_k), \quad N_k < n < N'_k.
\end{equation}
We can use \eqref{eq:disten} to
choose $\{N_k\}$ increasing fast enough, so that  (say)
\begin{equation}
\label{eq:signear}
|\sig(n) - n| < \tfrac1{10} \cdot k^{-1}, \quad n > N_k.
\end{equation} 
This implies that $\Lam$ is a uniformly discrete set.
Moreover, if the elements of $\Lam$ are
enumerated as $\{\lam_j\}_{j=1}^{\infty}$ 
by increasing modulus, that is,
$0 < |\lam_1| \le |\lam_2| \le \dots$, then
there are positive integers $n_j$ such that
\begin{equation}
\label{eq:lambenum}
|\lam_j| = n_j + o(1), \quad j \to + \infty.
\end{equation}
(We note that each $\lam_j$ may be either positive or negative,
since the polynomials $P_k$ have both positive and negative spectra).

We now observe that in fact we have
$0 < n_1 < n_2 < \dots$. Indeed, 
the fact that the polynomials $Q_k$
have positive spectra implies that if 
$\lam \in \Lam$ is given by  \eqref{eq:lamptform}, then
$|\lam| =  |m| \cdot \nu_k + \sign(m) \cdot \sig(n)$,
where $\sign(m)$ is $ +1$ or $-1$ according to
whether $m$ is positive or negative.
Hence, if  $\lam_j $ and $\lam_l$  are
two distinct points of $\Lam$ then 
$n_j \neq n_l$.

\subsection{}
For $\lam \in \Lam$  given by \eqref{eq:lamptform} 
we define $h_\lam^* \in (A^p(\R))^*$ by
\begin{equation}
\label{eq:coefffunc}
h_\lam^* :=  d_{n,k}  \ft{P}_k(m) \, \psi_k.
\end{equation}
We will prove that each
$f \in A^p(\R)$ admits a series representation 
\begin{equation}
\label{eq:schfexp}
f (t) = \sum_{j=1}^{\infty}
h_{\lam_j}^*(f) w(t)  e^{2 \pi i \lam_j t}
\end{equation}
where the convergence is in the $A^p(\R)$ norm.
This will establish \thmref{main_2}, and as a consequence,
 \thmref{thm:main} will also be proved.

To prove the representation \eqref{eq:schfexp}, we first observe that
any partial sum of the series
can be decomposed as $S' + S'' + S'''$ where
\begin{equation}
\label{eq:partial1}
S'(t) = \sum_{s=1}^{k-1} \psi_s(f)  w (t)  \tilde{P}_s(t)  Q_s(t),
\end{equation}
\begin{equation}
\label{eq:partial2}
S''(t) =  \psi_k(f) w(t) Q_k(t) S_{l}(P_k)(\nu_k t),
\end{equation}
and
\begin{equation}
\label{eq:partial3}
S''' (t)  =   \psi_k(f) w(t) \big[ \ft{P}_k(l+1) S_r(Q_{k,l+1})(t)
+ \ft{P}_k(-(l+1)) S_r(Q_{k,-(l+1)})(t) \big]
\end{equation}
for some $k$, $l$ and $r$. Indeed,  $S'$ consists of the series
terms with $\lam_j$ belonging to entire ``blocks'' of the form
$\spec ( \tilde{P}_{s} \cdot Q_{s})$, $1 \le s \le k-1$.
Similarly, recalling \eqref{eq:lacex1}, \eqref{eq:lacex2},
one can see that  $S''$ consists of the terms with
$\lam_j$ belonging to entire ``sub-blocks'' of the form
$\spec(Q_{k,m})$, $|m| \le l$.
Finally, $S'''$ consists of the remaining terms with
$\lam_j$ belonging to a part of the last two 
sub-blocks $\spec(Q_{k,l+1})$ and $\spec(Q_{k, -(l+1)})$.

We have $\|f - S'\|_{A^p(\R)} = o(1)$ as $k \to \infty$ due to \eqref{eq:S1}.

In order to estimate $\|S''\|_{A^p(\R)}$ we 
denote $\tilde{S}_{k,l}(t):= S_l(P_k)(\nu_k t)$, then
\begin{equation}
	\label{eq:wsklqksplit}
	\|w \cdot \tilde{S}_{k, l} \cdot Q_k \|_{A^p(\R)}\leq \|(w-u_k)\cdot \tilde{S}_{k,l}\cdot Q_k\|_{A^p(\R)}+ \|u_{k-1} \cdot \tilde{\gamma}_k \cdot  \tilde{S}_{k,l} \cdot Q_k \|_{A^p(\R)}.
\end{equation}
The first summand on the right hand side
can be estimated similarly to the inequalities \eqref{telesc1}--\eqref{telesc4},
using the fact that $\|\ft{S}_{k,l}\|_1 \le \| \ft{P}_k\|_1$.
To estimate the second summand, we use \lemref{sep-specR} 
to conclude that if $\nu_k$ is chosen sufficiently large, then
\begin{equation}
	\|u_{k-1} \cdot \tilde{\gamma}_k\cdot\tilde{S}_{k,l} \cdot Q_k\|_{A^p(\R)} 
	<  \|u_{k-1} \cdot Q_k\|_{A^p(\R)} \| \gamma_k\cdot S_{k,l}\|_{A^p(\T)} + 1.
\end{equation}
It follows from \eqref{eq:phikapprox} that
$\|u_{k-1} \cdot Q_k\|_{A^p(\R)} < 2$,
since the sequence   $\{\varphi_k\}$ is normalized in $A^p(\R)$, 
while according to \ref{ktlo:iv} we have 
$\|\gamma_k\cdot S_{k,l}\|_{A^p(\T)} < C_p$.
We conclude using \eqref{eq:wsklqksplit} that 
$\|w \cdot \tilde{S}_{k, l} \cdot Q_k \|_{A^p(\R)}=O(1)$.
In turn, together with \eqref{eq:S2}, \eqref{eq:partial2} this implies the desired estimate
$\|S''\|_{A^p(\R)}=o(1)$ as $k \to \infty$.

Finally, we estimate $\|S'''\|_{A^p(\R)}$ using  \ref{ktlo:i} and  the
inequalities \eqref{est_prod_2}, \eqref{eq:epsksmall} and obtain
\begin{equation}
\|S'''\|_{A^p(\R)}\le 2 |\psi_k(f)| \cdot \|\ft{P}_k\|_\infty \|\ft{Q}_k\|_{1} 
\|w\|_{A^p(\R)}\le 2 \eta_k \cdot |\psi_k (f)|\cdot  \|w\|_{A^p(\R)},
\end{equation}
which shows that $\|S'''\|_{A^p(\R)}=o(1)$ as $k\to\infty$ as well.

We conclude that \eqref{eq:schfexp} indeed
holds, which completes the proof of \thmref{main_2}.

 As a consequence, \thmref{thm:main} is also established.


\section{Schauder frames of translates and nonnegativity}
\label{sec:nonneg}

\subsection{}
In \cite{PS16} the following question was considered: do there exist (unconditional or not) Schauder bases or Schauder frames in the space $L^p(\R)$ consisting of nonnegative functions? It turns out that it is not difficult to construct a Schauder frame formed by nonnegative functions, while unconditional Schauder frames of nonnegative functions do not exist in any $L^p(\R)$ space,
see again \cite{PS16}.

In the paper \cite{JS15} (which was written after \cite{PS16}) some proofs from \cite{PS16} were simplified, and also a Schauder basis consisting of nonnegative functions in $L^1(\R)$  was constructed. A Schauder basis of nonnegative functions in $L^2(\R)$ was constructed in \cite{FPT21}. The existence of such a basis in $L^p(\R)$, $p\neq 1, 2$, remains open.

Motivated by the recent interest in nonnegative coordinate systems in $L^p(\R)$ spaces, we consider the following question: does there exist a Schauder frame  in the space $L^p(\R)$  formed by a uniformly discrete sequence of translates of a \emph{nonnegative} function? 

In this section, our goal is to show that 
this additional requirement of nonnegativity can indeed be achieved 
in our main result, namely:

\begin{thm}
\label{thm:nonneg}
The function $g$ in \thmref{thm:main}
can be chosen nonnegative.
\end{thm}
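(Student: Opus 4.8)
The plan is to build the nonnegative function as a suitable translate of $g$ combined with a fixed nonnegative "bump," exploiting the freedom that Schauder frames (unlike bases) do not require uniqueness of the expansion. Recall that in Theorem~\ref{thm:main} the function $g$ was produced as $g = \ft{w}$ where $w \in A^p(\R)$ is the limit of the inductive construction, and the frequencies $\lam_j$ sit in disjoint high-frequency blocks. The key observation I would exploit is that multiplying $w$ by a fixed exponential $e^{2\pi i a t}$, i.e.\ translating $g$ by $a$ on the physical side, does not affect any of the $A^p(\R)$ norm estimates, since the $A^p$ norm is translation-invariant in frequency. So the nonnegativity issue is really about replacing $g$ itself, not about the frame mechanism.

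The main idea I would pursue is to split the desired nonnegative target into a sum of two pieces and run the construction so that $g$ equals a genuine nonnegative profile plus a controllable correction. Concretely, I would fix in advance a nice nonnegative Schwartz function $\phi \geq 0$ (for instance $\phi = |\psi|^2$ for some Schwartz $\psi$, so that $\phi \geq 0$ automatically and $\phi$ has good decay), and aim to produce the frame function in the form $g = \phi + (\text{small error})$, where the error is forced to be small in an $L^\infty$ or pointwise sense so that $g \geq 0$ survives. On the $A^p(\R)$ side this means replacing the limit distribution $w$ by $\ft{\phi} + (\text{small})$, and the inductive construction must be arranged so that the accumulated perturbations $\prod_k \tilde{\gam}_k$ stay uniformly close to a nonnegative profile. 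Since each $\gam_k$ satisfies $\|\gam_k - 1\|_{A^p(\T)} < \eps_k$ with $\eps_k$ as small as we like, the product $w = u_0 \cdot \prod_k \tilde{\gam}_k$ already differs from $u_0$ by an arbitrarily small amount; so if I start the induction from $u_0 = \ft{\phi}$, i.e.\ from a nonnegative base profile $\phi = \ft{u_0}$, the resulting $g = \ft{w}$ will be a small $A^p$-perturbation of $\phi$.

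The obstacle is that smallness in $A^p(\R)$ norm does not by itself imply pointwise (hence sign-preserving) smallness of $g = \ft{w}$ when $p > 2$, because $A^p$ is then not even a function space. I would overcome this by controlling the perturbation in a stronger topology: the multiplicative corrections $\tilde{\gam}_k - 1$ are built from trigonometric polynomials, and via the auxiliary $\nmb{\cdot}$ norm and Lemma~\ref{ufprodst} one can bound $\|u_k - u_{k-1}\|$ in whatever norm is convenient. Specifically, since $u_0 = \ft{\phi}$ can be taken to decay rapidly and each factor $\tilde\gam_k$ is a bounded trigonometric polynomial, I would arrange the $\eps_k$ and $\nu_k$ so that the differences $u_k - u_{k-1}$ are small in the $\nmb{\cdot}$ norm; this controls $\sup_x (1+x^2)|\ft{u_k}(x) - \ft{u_{k-1}}(x)| = \sup_x (1+x^2)|g_k(x) - g_{k-1}(x)|$, giving uniform pointwise convergence of $g_k$ to $g$ with an explicit tail bound. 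Choosing the base bump $\phi$ bounded below away from zero on the relevant region (or more precisely, arranging $\phi(x) \geq c(1+x^2)^{-1}$ on its support and the cumulative error $\sum_k \sup_x (1+x^2)|g_k - g_{k-1}|$ to be less than $c$) then forces $g \geq 0$ everywhere.

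The remaining step is to verify that replacing the original starting function by this nonnegative-profile $u_0 = \ft{\phi}$ is compatible with Theorem~\ref{thm:lev24}: we need the translates $\{u_0(t)e^{2\pi i \sig(n)t}\}$ to be complete in $A^p(\R)$. Since Theorem~\ref{thm:lev24} provides \emph{a} Schwartz function $g$ with complete positive translates and the completeness is robust under the multiplicative perturbations used in the induction, I expect the cleanest route is to show that completeness is preserved when the Schwartz starting function is replaced by any small $\nmb{\cdot}$-perturbation of it (via Lemma~\ref{lem:uniqaqprod} as in the main proof), and then to observe that one may take $\phi = \ft{u_0}$ for the very $u_0$ furnished by Theorem~\ref{thm:lev24} after multiplying by a large nonnegative bump to dominate it. I therefore expect the genuine difficulty to be purely the sign control, i.e.\ upgrading the $A^p$-smallness of the inductive perturbations to pointwise smallness and dominating it by a fixed nonnegative lower bound on $\phi$; once that is in place, every estimate in the proof of Theorem~\ref{main_2} goes through unchanged, since none of them is sensitive to the choice of the starting profile beyond its Schwartz regularity and the completeness of its translates.
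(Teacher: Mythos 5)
Your proposal correctly identifies the two ingredients that need attention (a nonnegative starting generator $u_0=\ft{\phi}$ with complete translates, and control of the cumulative perturbation $\prod_k\tilde\gam_k$), but the central mechanism you propose --- upgrading the $A^p$-smallness of the perturbations to \emph{pointwise} smallness and dominating the error by a fixed lower bound on $\phi$ --- cannot work. On the physical side the $k$-th step replaces $g_{k-1}=\ft{u}_{k-1}$ by $g_k(x)=\sum_n\ft{\gam}_k(n)\,g_{k-1}(x-n\nu_k)$. The nonzero coefficients $\ft{\gam}_k(n)$, $n\neq 0$, deposit copies of $g_{k-1}$ of height comparable to $\ft\gam_k(n)\cdot\|g_{k-1}\|_\infty$ at the far-away locations $n\nu_k$, where $\nu_k$ is chosen large \emph{after} $\gam_k$; there the base profile $\phi$ (being Schwartz, or indeed any integrable profile) is far too small to dominate them, so $|g-\phi|\le\phi$ fails badly. (Your fallback condition $\phi(x)\ge c(1+x^2)^{-1}$ is in any case incompatible with $\phi$ being Schwartz.) Worse, the smallness you would need is essentially $\|\gam_k-1\|_{A^1(\T)}$-smallness, i.e.\ $\sum_{n\neq 0}|\ft\gam_k(n)|+|\ft\gam_k(0)-1|$ small, and this is impossible in principle: as Remark~1 after \lemref{lemP3.3} points out, the localization lemma fails for $p=1$ precisely because conditions \ref{tlo:ii}+\ref{tlo:iii} in the $A^1$ norm would force $P$ to be uniformly close to $1$, contradicting $\ft P(0)=0$. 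So the multipliers $\gam_k$ are necessarily \emph{not} small perturbations of $1$ in any norm strong enough to give pointwise control of $g_k-g_{k-1}$, and the statement ``one can bound $\|u_k-u_{k-1}\|$ in whatever norm is convenient'' is false (the $\nmb{\cdot}$ norm of $u_k-u_{k-1}$ is in fact large, of order $\nu_k^2$ times the largest off-zero coefficient).

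The paper's proof avoids smallness altogether: it replaces \lemref{lemP3.3} by a strengthened version (\lemref{lemN5.3}) in which the Riesz-product factor is built from the special function of \lemref{lemF2.1}, guaranteeing $\ft\gam_k(n)\ge 0$ for all $n$. Then $g_k=\sum_n\ft{\gam}_k(n)\,g_{k-1}(\cdot-n\nu_k)$ is a nonnegative combination of translates of $g_{k-1}$, so nonnegativity propagates \emph{exactly} through every step regardless of the size of the coefficients, and passes to the $L^p$ limit $g=\ft w$. Combined with \lemref{lem:nnglev24} (which supplies a nonnegative starting generator with complete uniformly discrete translates --- a point you gesture at but do not resolve; dominating $u_0$ by a large bump does not preserve completeness), this is the actual argument. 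Your proposal is missing this key idea, and the route it substitutes for it would fail.
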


To establish this,  we will make certain modifications to
the proof of \thmref{thm:main}.

\subsection{}
\label{subsec:nonnegL24}
First, we recall that our construction 
 in \secref{secC1}  began with an application of
\thmref{thm:lev24}, which yields
a real sequence $\{\lam_n\}_{n=1}^{\infty}$ satisfying
$ \lam_n = n +  o(1)$,  and 
a Schwartz function $g$ on $\R$, 
such that for any $N$ the system 
\begin{equation}
\{g(x - \lam_n)\}, \; n>N,
 \end{equation}
is complete in the space $L^p(\R)$, $p>1$.
So first, we need the following result:

\begin{lemma}
\label{lem:nnglev24}
The function $g$ in
\thmref{thm:lev24}
can be chosen nonnegative.
\end{lemma}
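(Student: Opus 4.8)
The plan is to take the function $g$ from \thmref{thm:lev24} and modify it so that it becomes nonnegative, while preserving the crucial completeness of the translate systems $\{g(x - \lam_n)\}_{n > N}$. Since the translates live in physical space but the completeness is most naturally checked on the Fourier side in $A^p(\R)$, I would first reformulate the statement. Writing $w := \ft{g}$, the translate system corresponds (up to unimodular factors) to the weighted-exponential system $\{w(t) e^{-2\pi i \lam_n t}\}$ in $A^p(\R)$, and completeness of the translates in $L^p(\R)$ is equivalent to completeness of these weighted exponentials in $A^p(\R)$. The requirement ``$g \ge 0$'' becomes ``$w$ is positive-definite,'' i.e. $w = \ft{\mu}$ for a nonnegative function/measure; a clean sufficient condition is $w = |v|^2 = v \cdot \overline{v}$ for some $v$, since then $g = \ft{v} \ast \overline{\ft{v}(-\,\cdot\,)}$ is a convolution square and hence nonnegative pointwise.

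**Next I would** aim to realize $g$ as a convolution square $g = G \ast \widetilde{G}$, where $\widetilde{G}(x) := \overline{G(-x)}$ and $G$ is Schwartz; this automatically gives $g(x) = \int G(x+y)\overline{G(y)}\,dy \ge 0$ only at $x=0$ in general, so more honestly I want $g = |G|^{\ast 2}$ arranged so that $g \ge 0$ everywhere — the safe route being to put $\ft{g} = |\ft{G}|^2 \ge 0$, making $g$ positive-definite rather than pointwise nonnegative. Since the theorem asks for $g \ge 0$ pointwise, the correct device is the reverse: take $g := |G|^2$ for a Schwartz function $G$, which is manifestly nonnegative, and transfer the completeness requirement to $G$. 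The completeness of $\{g(x-\lam_n)\} = \{|G(x - \lam_n)|^2\}$ is awkward directly, so I expect the actual mechanism to be a perturbation/density argument: start from the existing $g_0$ produced by \thmref{thm:lev24}, which is Schwartz and has complete (redundant, removable-finite) translate systems, and show that the nonnegative function $g := g_0 + c\,\phi$ (with $\phi \ge 0$ a fixed Schwartz bump and $c$ large enough to dominate, or $g := |G|^2$ with $G$ close to a function whose translates are complete) still has complete translates, because completeness of a translate system is stable under small perturbations in $L^p$ and the construction in \cite{Lev25} presumably has enough flexibility to fix the sign.

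**The main obstacle** I anticipate is that nonnegativity and completeness pull in opposite directions: a nonnegative $g$ has $\ft{g}(0) = \int g > 0$ with $\ft{g}$ continuous and the ``DC component'' locked positive, which constrains the high-frequency structure that \thmref{thm:lev24} exploits to generate completeness. Concretely, one cannot simply take absolute values or add a bump without checking that the spanning argument survives; the key will be to re-enter the proof of \cite{Lev25} and verify that its construction can be carried out with a nonnegative profile, rather than treating $g$ as a black box. I therefore expect the honest proof to open up the construction of \cite{Lev25} and run it with a nonnegative seed function, reproving completeness for the modified $g$ by the same mechanism (likely an explicit synthesis of a spanning set using the removable-finite-subset completeness from \thmref{thm:lev24} together with a positivity-preserving modification), rather than deriving \lemref{lem:nnglev24} formally from the black-box statement of \thmref{thm:lev24} alone.
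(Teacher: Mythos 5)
Your closing diagnosis is the right one, and it is essentially all the paper itself says: \lemref{lem:nnglev24} is not deduced from the black-box statement of \thmref{thm:lev24}, but by re-opening the construction of \cite{Lev25} and running it with a nonnegative generator; the paper's entire ``proof'' is a pointer to \cite[Section 4]{LT25b}, where that adaptation is carried out. So at the level of strategy you have matched the paper exactly.

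The gap is that your proposal contains no actual argument, and the concrete fallback devices you float along the way would fail. Completeness of a uniformly discrete translate system $\{g(x-\lam_n)\}$ is \emph{not} stable under small perturbations of the generator in $L^p(\R)$: completeness (unlike the frame or Riesz-basis property) carries no quantitative lower bound, so an arbitrarily small change of $g$ can destroy it, and replacing $g_0$ by $g_0+c\varphi$ with a large nonnegative bump changes the annihilation condition $\langle \alpha, T_{\lam_n}g\rangle=0$ entirely --- nothing from the completeness of $\{g_0(x-\lam_n)\}$ survives such a modification. The detour through positive-definiteness and convolution squares also conflates $g\ge 0$ with $\ft{g}\ge 0$; what the application in \secref{sec:nonneg} requires is pointwise nonnegativity of $g$ itself, since $g$ plays the role of $\ft{u}_0$ in the induction and the nonnegativity must persist through the convolutions $\ft{u}_k=\ft{u}_{k-1}\ast\ft{\tilde{\gamma}}_k$; and, as you note yourself, the substitute $g=|G|^2$ gives no handle on completeness of $\{|G(x-\lam_n)|^2\}$. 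In short, the proposal correctly locates where the work must happen but does none of it: closing the gap requires actually rerunning the completeness construction of \cite{Lev25} with a nonnegative profile, which is the content of \cite[Section 4]{LT25b}.
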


The proof, based on adapting the approach in \cite{Lev25}, is given in 
\cite[Section 4]{LT25b}.

\subsection{}
The second ingredient which enables us to construct
a nonnegative function $g$ in \thmref{thm:main},
 is the following lemma.

\begin{lem}
\label{lemF2.1}
Let $a$ and $h$ be two positive real numbers,
$2h < a < \frac1{2} - 2h$. Then there is a nonnegative
function $\varphi \in A(\T)$ with the following properties:
\begin{enumerate-num}
\item \label{itp:i} $\ft{\varphi}(0)=1$, $\ft{\varphi}(n) \ge 0$ for all $n \in \Z$;
\item \label{itp:ii} $\varphi$ vanishes on the set
$[-a-h, -a + h] \cup [a-h, a+h]$;
\item \label{itp:iii} $\| \varphi - 1\|_{A^p(\T)} \le 12 \cdot h^{(p-1)/p}$
for every $p \ge 1$.
\end{enumerate-num}
\end{lem}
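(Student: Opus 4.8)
The plan is to split the three requirements and, crucially, to reformulate (iii) into two scale-free coefficient bounds. Since $\ft\varphi(0)=1$ and $\ft\varphi(n)\ge 0$, the vector $\ft{(\varphi-1)}$ is nonnegative and supported off $0$, so $\|\varphi-1\|_{A^p(\T)}=\big(\sum_{n\neq 0}\ft\varphi(n)^p\big)^{1/p}$. Using the elementary inequality $\|v\|_{\ell^p}^p\le\|v\|_{\ell^\infty}^{p-1}\|v\|_{\ell^1}$, the requirement that $\|\varphi-1\|_{A^p(\T)}\le 12\,h^{(p-1)/p}$ hold for \emph{every} $p\ge 1$ is equivalent to the two estimates
\[
\sum_{n\neq 0}\ft\varphi(n)\le 12,\qquad \sup_{n\neq 0}\ft\varphi(n)\le 12\,h .
\]
Indeed the first is the case $p=1$ and the second is the limit $p\to\infty$ (as $\|v\|_{\ell^p}$ decreases to $\|v\|_{\ell^\infty}$ and $h^{(p-1)/p}\downarrow h$), while conversely the displayed pair recovers the bound for all intermediate $p$ with the \emph{same} constant $12$. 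Thus (iii) says exactly that every nonzero coefficient is uniformly $O(h)$ while the total mass stays bounded, which also explains the exponent $(p-1)/p$.

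To secure (i) — nonnegativity of the function \emph{and} of all its coefficients simultaneously — the natural device is an autocorrelation. I would seek $\varphi=\eta\star\eta$, where $\eta\ge 0$, $\int_\T\eta=1$, and $(\eta\star\eta)(t):=\int_\T\eta(s)\,\eta(s-t)\,ds$. Then $\ft\varphi(n)=|\ft\eta(n)|^2\ge 0$, $\ft\varphi(0)=1$, and $\varphi\ge 0$ pointwise, all for free; moreover $\varphi$ is even. The function $\varphi$ vanishes at $t$ exactly when $\supp\eta$ and its translate $\supp\eta+t$ are disjoint, so (ii) reduces to the purely geometric condition that $\supp\eta$ be disjoint from $\supp\eta+t$ for all $t\in[a-h,a+h]$; vanishing on $[-a-h,-a+h]$ is then automatic from the evenness of $\varphi$. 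Under the reformulation above, the two coefficient bounds become $\|\eta\|_{L^2(\T)}^2-1\le 12$, which is trivially arranged (e.g.\ $\eta$ with $|\supp\eta|\approx\tfrac12$ gives $\|\eta\|_2^2\approx 2$), together with the \emph{uniform spectral-flatness} bound $|\ft\eta(n)|\le\sqrt{12\,h}$ for all $n\neq 0$.

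The main obstacle is precisely this uniform bound. The translation gap forces $|\supp\eta|\le\tfrac12$ (since $\supp\eta$ and $\supp\eta+a$ are disjoint), so $\eta$ is far from constant and its low-frequency coefficients are not automatically small; worse, any support that is \emph{periodic} at the relevant scale $h$ produces a coherent coefficient of size $\Theta(1)$ at the period frequency, which alone violates $|\ft\eta(n)|\le\sqrt{12h}$. I would therefore take $\supp\eta$ to be a \emph{spread-out, aperiodic} union of $\sim 1/h$ arcs of length $\sim h$, selected by pairing each arc with its translate by $\approx a$ and keeping one arc from each pair, so that $\supp\eta\cap\big(\supp\eta+[a-h,a+h]\big)=\varnothing$, while the arcs are distributed pseudorandomly (or by an explicit low-correlation pattern) so that the exponential sums $\ft\eta(n)$ exhibit square-root cancellation $|\ft\eta(n)|\lesssim\sqrt h$ on $|n|\lesssim 1/h$; the remaining high frequencies are then tamed by smoothing each arc into a $\Delta_h$-type triangular profile, whose transform is already known to decay (cf.\ \eqref{estim_delta}). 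Verifying $|\ft\eta(n)|\le\sqrt{12h}$ uniformly across all frequencies — reconciling the forced support gap with the demanded spectral flatness and pinning the explicit constant — is where essentially all the difficulty lies, whereas nonnegativity, the gap (ii), and the $p=1$ mass bound fall out immediately from the construction.
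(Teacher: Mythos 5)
Your reduction of condition \ref{itp:iii} to the two scale-free bounds $\sum_{n\neq 0}\ft{\varphi}(n)\le 12$ and $\sup_{n\neq 0}\ft{\varphi}(n)\le 12h$ is correct and is in fact exactly the structure the paper exploits (it is the same interpolation that underlies \eqref{estim_delta}). The autocorrelation ansatz $\varphi=\eta\star\eta$ is also internally consistent: it does deliver \ref{itp:i} and reduces \ref{itp:ii} to a support-disjointness condition. But the proof is not complete, and you say so yourself: the entire burden has been shifted onto the existence of a nonnegative $\eta$ with $\int_\T\eta=1$, $\supp\eta\cap(\supp\eta+t)=\varnothing$ for all $t\in[a-h,a+h]$, and the uniform spectral-flatness bound $|\ft{\eta}(n)|\le\sqrt{12h}$ for all $n\neq 0$. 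No such $\eta$ is constructed, and the proposed route (a pseudorandom union of $\sim 1/h$ arcs with square-root cancellation in all exponential sums, with an explicit constant, subject to the translation-gap constraint) is a genuinely nontrivial flatness problem of Tur\'an--Littlewood type: the forced bound $|\supp\eta|\le 1/2$ makes $\sum_{n\neq0}|\ft{\eta}(n)|^2\ge 1$, so the mass must be spread over $\gtrsim 1/h$ frequencies with no single coefficient resonating, and verifying this for \emph{all} $n$ (small $n$, resonant $n$ near the reciprocal arc spacing, and large $n$) with the constant $12$ is where a real argument would have to live. A proposal that terminates at ``this is where essentially all the difficulty lies'' has a genuine gap precisely there.

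The paper's proof shows this difficulty is avoidable because one does not need $\ft{\varphi}(n)$ to be a square: it suffices to dominate the negative contributions by a positive multiple of $\Delta_h$. Concretely, set $\psi:=6\Delta_h-\tau_h(\cdot+a)-\tau_h(\cdot-a)$ and $\varphi:=1+\psi$. Since $\tau_h=\Delta_h\ast(\delta_{-h}+\delta_0+\delta_h)$, one has $\psi=\Delta_h\ast(6\delta_0-\nu)$ with $\nu=(\delta_{-h}+\delta_0+\delta_h)\ast(\delta_{-a}+\delta_a)$ a positive measure of total mass $6$, whence $|\ft{\nu}(n)|\le 6$ and $\ft{\psi}(n)=\ft{\Delta}_h(n)\,(6-\ft{\nu}(n))\in[0,12\,\ft{\Delta}_h(n)]\subset[0,12h]$; this yields \ref{itp:i} and both of your coefficient bounds at once, via \eqref{estim_delta}. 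Pointwise nonnegativity and the vanishing on $[a-h,a+h]\cup[-a-h,-a+h]$ follow from $0\le\tau_h\le 1$ and the disjointness of the supports of $\Delta_h$, $\tau_h(\cdot-a)$, $\tau_h(\cdot+a)$ guaranteed by $2h<a<\tfrac12-2h$. You would do well to look for such an explicit ``positive-definite minus small bump'' construction before committing to a pseudorandom one.
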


\begin{proof}
We consider the function
\begin{equation}
\label{lemF2.1.1}
\psi(t) := 6 \cdot \Del_h(t) -  \tau_h(t+a) - \tau_h(t-a),
\end{equation}
which is in $A(\T)$. We observe that 
$\tau_h = \Del_h \ast (\del_{-h} + \del_0 + \del_h)$,
where $\del_t$ denotes the Dirac measure at a point $t \in \T$. Hence
\begin{equation}
\label{lemF2.1.2}
\psi = 6  \cdot \Del_h - \Del_h \ast (\del_{-h} + \del_0 + \del_h) \ast (\del_{-a} + \del_a)
= \Del_h \ast (6 \cdot \del_0 - \nu),
\end{equation}
where $\nu = (\del_{-h} + \del_0 + \del_h) \ast (\del_{-a} + \del_a)$.
Then $\nu$ is a positive measure on $\T$ with total mass
 $\ft{\nu}(0) = 6$, and the Fourier coefficients $\ft{\nu}(n)$ are all real valued.
This implies that
\begin{equation}
\label{lemF2.1.3}
-6 \le \ft{\nu}(n) \le 6, \quad n \in \Z.
\end{equation}
It now follows from \eqref{lemF2.1.2} that
$\ft{\psi}(n) = \ft{\Del}_h(n)  (6 - \ft{\nu}(n))$,
and 
since both terms in the product are nonnegative, we conclude
that $\ft{\psi}(n) \ge 0$ for all $n \in \Z$. Furthermore,
\begin{equation}
\label{lemF2.1.4}
\| \psi \|^p_{A^p(\T)}
= \sum_{n \in \Z} \ft{\psi}(n)^p 
= \sum_{n \in \Z} \ft{\Del}_h(n)^p  (6 - \ft{\nu}(n))^p
\le 12^p \| \Del_h \|^p_{A^p(\T)} \le 12^p h^{p-1}
\end{equation}
for every $p \ge 1$, due to \eqref{estim_delta}.
 Finally, set $\varphi := 1 + \psi$. 
Then it is obvious that  condition \ref{itp:i} holds. 
It follows from \eqref{lemF2.1.1} that the function
$\varphi$  is  nonnegative and satisfies \ref{itp:ii}.
Moreover, we obtain   \ref{itp:iii} as a consequence
of \eqref{lemF2.1.4}. The lemma is thus proved.
\end{proof}

\begin{remark}
One can show that condition \ref{itp:ii} implies that 
 $\| \varphi - 1\|_{A^p(\T)} \ge h^{(p-1)/p}$
for every $p \ge 1$, so the estimate 
\ref{itp:iii} is sharp up to the numerical
value of the constant.
\end{remark}

\subsection{}
We now use \lemref{lemF2.1} to strengthen \lemref{lemP3.3} as follows.

\begin{lem}
\label{lemN5.3}
Let $p > (1+\sqrt{5})/2$. 
Given any $\eps > 0$ there exist two real trigonometric polynomials
$P$  and  $\gam$ with integer spectrum, such that 
\begin{enumerate-num}
\item \label{nntlo:pd} $\ft{\gam}(0)=1$, $\ft{\gam}(n) \ge 0$ for all $n \in \Z$;
\item \label{nntlo:i} $\ft{P}(0) = 0$,  $\| \ft{P} \|_{\infty} < \eps$;
\item \label{nntlo:ii} $\| \gam  - 1 \|_{A^p(\T)} < \eps$;
\item \label{nntlo:iii} $\|\gam \cdot P - 1 \|_{A^p(\T)} < \eps$;
\item \label{nntlo:iv} $\max\limits_l \|\gam \cdot S_l(P)  \|_{A^p(\T)} \le 3$.
\end{enumerate-num}
\end{lem}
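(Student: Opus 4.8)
The plan is to upgrade \lemref{lemP3.3} to obtain the additional nonnegativity property \ref{nntlo:pd} for $\gam$, while essentially preserving the construction that already gives properties \ref{nntlo:i}--\ref{nntlo:iv}. Recall that in the proof of \lemref{lemP3.3} the polynomial $\gam$ was built as a product $\gam(t)=\prod_{j=0}^{N-1}f(\nu^j t)$, where $f$ was a high-order Fourier partial sum of $1-\tau_h$. The difficulty is that this particular $f$ does \emph{not} have nonnegative Fourier coefficients, so its dilated product $\gam$ fails \ref{nntlo:pd}. My strategy is to replace the factor $1-\tau_h$ by the nonnegative function $\varphi$ furnished by \lemref{lemF2.1}, whose Fourier coefficients are all nonnegative and whose distance to $1$ in $A^p(\T)$ is controlled by $12\,h^{(p-1)/p}$, comparable to the estimate \eqref{eq:lemfapest} that drove the original argument.

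Concretely, I would proceed as follows. First, fix $\del=\del(\eps,p)$ small and choose $0<h$ and $N$ satisfying the analogue of the conditions \eqref{eq:mainconlem}, with the numerical constants adjusted to accommodate the factor $12$ from \lemref{lemF2.1} in place of the factor $3$ coming from $\tau_h$; the same computation using $p>(1+\sqrt5)/2$ shows such $h,N$ exist. Pick a position $a$ with $2h<a<\frac12-2h$ and let $\varphi$ be the function from \lemref{lemF2.1}, so $\varphi\ge0$, $\ft\varphi\ge0$, $\ft\varphi(0)=1$, and $\|\varphi-1\|_{A^p(\T)}\le12\,h^{(p-1)/p}$. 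Since $\varphi\in A(\T)$ is a genuine (continuous) function, I approximate it by a Fourier partial sum $f$ of sufficiently high order, chosen so that $f$ still has nonnegative Fourier coefficients (truncation preserves this), $\ft f(0)=1$, and $\|f-\varphi\|_{A^p}$ is negligible; I also take $g$ to be a partial sum of $1-h^{-1}\Del_h$ as before, and pick $\nu$ large as in \eqref{eq:nularge}. Then I set $\gam(t):=\prod_{j=0}^{N-1}f(\nu^j t)$ and $P(t):=\frac1N\sum_{j=0}^{N-1}g(\nu^j t)$, exactly paralleling the earlier definitions.

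The verification of the five conditions then runs along the lines of the original proof. Property \ref{nntlo:pd} is the new point: because each factor $f$ has nonnegative Fourier coefficients and $\nu$ is large, \lemref{polydil} shows that the Fourier coefficients of the product $\gam$ are (sums of) products of the coefficients of $f$, hence nonnegative, and $\ft\gam(0)=\ft f(0)^N=1$. Property \ref{nntlo:i} is verified precisely as before, since the construction of $P$ is unchanged. For \ref{nntlo:ii}, \lemref{polydil} gives $\|\gam\|_{A^p}^p=\|f\|_{A^p}^{pN}$ and $\ft\gam(0)=1$, and the estimate $\|f\|_{A^p}^p\le\|\varphi\|_{A^p}^p+o(1)\le 1+12^p h^{p-1}$ combined with the analogue of \eqref{eq:mainconlem} yields $\|\gam-1\|_{A^p}<\eps$ for $\del$ small. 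Properties \ref{nntlo:iii} and \ref{nntlo:iv} follow from the same telescoping and partial-sum decompositions used in \eqref{eq:gamptdiff}--\eqref{eq:bprgmest}, with the relation $f\cdot g=f$ (or its approximate version \eqref{eq:fgproda}) replaced by the corresponding approximate identity $\varphi\cdot(1-h^{-1}\Del_h)$, and with $\|f\|_A\le\|\varphi\|_A+o(1)$ playing the role of \eqref{eq:faleqfour}. The main obstacle, and the place requiring care, is ensuring that passing from the compactly supported combinations $1-\tau_h$ to the nonnegative $\varphi$ does not degrade the crucial $A^p$ estimates: the larger constant $12$ must be absorbed by choosing $N$ large, which is exactly where the hypothesis $p>(1+\sqrt5)/2$ is again used, and one must check that the product relation governing \ref{nntlo:iii}--\ref{nntlo:iv} still holds for the factor $\varphi$ (equivalently, that $\varphi\cdot(1-h^{-1}\Del_h)$ is close to $\varphi$ in $A(\T)$), which follows from $\Del_h$ and $\tau_h$ being supported near the origin while $\varphi$ can be taken to equal $1$ near the relevant set.
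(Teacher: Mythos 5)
Your overall architecture is the right one and matches the paper's: rerun the proof of \lemref{lemP3.3} with the factor $1-\tau_h$ replaced by the nonnegative function $\varphi$ of \lemref{lemF2.1}, so that $\gam=\prod f(\nu^j t)$ inherits nonnegative Fourier coefficients. But there is a genuine gap in your choice of the second factor. You keep $g$ as a partial sum of $1-h^{-1}\Del_h$, and the whole argument for conditions \ref{nntlo:iii} and \ref{nntlo:iv} rests on the analogue of \eqref{eq:fgproda}, i.e.\ on $\varphi\cdot(1-h^{-1}\Del_h)=\varphi$, equivalently $\varphi\cdot\Del_h=0$. This is false, and cannot be repaired while the spike $h^{-1}\Del_h$ sits at the origin: any function with nonnegative Fourier coefficients and $\ft\varphi(0)=1$ satisfies $\varphi(0)=\sum_n\ft\varphi(n)\ge 1$, so $\varphi$ cannot vanish on a neighborhood of $0$ (for the explicit $\varphi=1+6\Del_h-\tau_h(\cdot+a)-\tau_h(\cdot-a)$ of \lemref{lemF2.1} one even has $\varphi(0)=7$). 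Consequently $\|\varphi\cdot(1-h^{-1}\Del_h)-\varphi\|_{A}=\|h^{-1}\varphi\cdot\Del_h\|_{A}\ge h^{-1}\varphi(0)\Del_h(0)\ge h^{-1}$, which is enormous rather than $<\del$. Your closing claim that it suffices that ``$\varphi$ can be taken to equal $1$ near the relevant set'' is doubly wrong: $\varphi$ is not close to $1$ near the origin, and even if it were identically $1$ there, $\varphi\cdot(1-h^{-1}\Del_h)$ would still differ from $\varphi$ by $h^{-1}\Del_h$, of $A(\T)$-norm $h^{-1}$. What is actually needed is that $\varphi$ \emph{vanish} on the support of the spike.

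This is precisely the role of the parameter $a$ and of property \ref{itp:ii} in \lemref{lemF2.1}, which you quote but never use. The paper's proof takes $\psi=1-h^{-1}\Del_h\ast\frac12(\del_{-a}+\del_a)$, i.e.\ it translates the spike to $\pm a$, where $\varphi$ genuinely vanishes on $[-a-h,-a+h]\cup[a-h,a+h]$; then $\varphi\cdot\psi=\varphi$ holds exactly, $\ft\psi(0)=0$ and $\|\ft\psi\|_\infty\le 1$ are preserved, and the rest of your outline (the choice of $h,N$ with the constant $12$ in place of $3$, \lemref{polydil} for \ref{nntlo:pd} and \ref{nntlo:ii}, and the decompositions \eqref{eq:gamptdiff}--\eqref{eq:bprgmest} for \ref{nntlo:iii}--\ref{nntlo:iv}) goes through as you describe. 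With that single correction your proof coincides with the paper's.
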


The novel part of this result compared to \lemref{lemP3.3}
  is the addition of property
\ref{nntlo:pd}, that is, the requirement that $\gamma$ 
have nonnegative Fourier coefficients.

\begin{proof}[Proof of \lemref{lemN5.3}]
We first choose a small $\del = \del(\eps, p) > 0$, and 
then choose and fix $0 < h < 1/8$ and a positive integer $N$ 
(both depending on $\eps$, $\del$ and $p$) such that 
\begin{equation}
\label{eq:nnmainconlem}
N > \eps^{-1}, \quad (1 + 12^p h^{p-1})^N < 1 + \del, \quad
13^p N^{-p} h^{-1}  < 1 - \del.
\end{equation}
To see that such a choice of $h$ and $N$ exists, it suffices
to let $h = h(N,\del,p)$
be defined by the condition $N = 12^{-p}  h^{1-p} \log(1+\del)$,
and take $N$ to be sufficiently large.

We now choose  an arbitrary number $a$  satisfying
$2h < a < \frac1{2} - 2h$.
Let $\varphi$ be the function of \lemref{lemF2.1},
and let $\psi = 1 - h^{-1} \Del_h \ast \frac1{2} (\del_{-a} + \del_a)$,
where again $\del_t$ is the Dirac measure at a point $t \in \T$. 
We observe that 
$\varphi \cdot \psi = \varphi$. 
Let  $f$ and $g$ be  Fourier partial sums 
of $\varphi$ and $\psi$ respectively,  
of sufficiently high order such that
$\|f \cdot g - f \|_{A} < \del$.

Next,  choose a  positive integer $\nu$ satisfying 
$\nu > 2( \deg(f)+ \deg(g))$,
and define
\begin{equation}
\gam(t) := \prod_{j=0}^{N-1} f(\nu^j t), \quad
P(t) := \frac1{N} \sum_{j=0}^{N-1} g(\nu^j t).
\end{equation}		
The proof can now continue in the same way as in
\lemref{lemP3.3}, and it follows that this
choice of $\gam$ and $P$ indeed satisfies
all the conditions \ref{nntlo:pd}--\ref{nntlo:iv}
of \lemref{lemN5.3}.
\end{proof}

\subsection{}
We can now adjust the proof of \thmref{thm:main}
given in \secref{secC1} 
as follows. Recall that in the proof we have
constructed by induction a sequence of 
Schwartz functions $u_k$. 
Thanks to \lemref{lem:nnglev24}
we may now assume that the first Schwartz function $u_0$
has a nonnegative Fourier transform $\ft{u}_0$.
Next, at the $k$'th step of the induction
we now use \lemref{lemN5.3} instead of
\lemref{lemP3.3}, which yields  a trigonometric polynomial
$\gam_k$ with nonnegative Fourier coefficients.
As a consequence, it follows that the function
$u_k := u_{k-1} \cdot \tilde{\gam}_k$
is a Schwartz function whose
Fourier transform $\ft{u}_k$ is nonnegative.

In turn,  the sequence $u_k$ 
converges in the space $A^p(\R)$
 to an element $w \in A^p(\R)$ whose Fourier
transform $\ft{w}$ is a nonnegative function
in $L^p(\R)$. We thus conclude that
\thmref{main_2} holds with the extra condition 
that $w$ have a nonnegative Fourier transform.
Finally, this means that \thmref{thm:main}
holds  with the function $g = \ft{w}$ which is
nonnegative,
 and so \thmref{thm:nonneg} is established.
\qed

\subsection*{Remarks}
1. In the space  $L^p(\R)$, $p>2$, one
 can adapt the technique 
from \cite[Section 3]{FOSZ14} in order to 
 construct a Schauder frame (not unconditional)
  formed by an \emph{arbitrary unbounded} 
sequence of translates of a \emph{nonnegative} function $g$.

2. One may also consider the
   Banach space  $C_0(\R)$ of continuous functions on $\R$ vanishing 
at infinity, endowed  with the norm  $\|f\|_{\infty} = \sup |f(x)|$, $x \in \R$.
First we  note that this space \emph{does not admit any 
unconditional Schauder frames}.  Indeed, a Banach space with an
unconditional Schauder frame is isomorphic to a complemented subspace of
a Banach space with an unconditional Schauder basis \cite{CHL99},
which is not the case for the space $C_0(\R)$
(see \cite[Section II.D, Corollary 12]{Woj91}).
On the other hand, again based on the technique 
from \cite[Section 3]{FOSZ14}, one can show
that the space $C_0(\R)$ admits a Schauder frame 
(not unconditional)   formed by an arbitrary unbounded 
sequence of translates of a function $g$, which can
moreover be chosen nonnegative.


\end{document}